\documentclass[reqno,twoside,10pt]{article} 

\usepackage{amsmath,amsthm,amssymb,amsfonts,mathrsfs,color,hyperref,xcolor}
\usepackage{bbm}
\usepackage{amsbsy}
\usepackage{latexsym}
\usepackage[final]{graphicx}
\usepackage[utf8]{inputenc}

\theoremstyle{plain}
\begingroup
\newtheorem{thm}{Theorem}[section]
\newtheorem{lem}[thm]{Lemma}
\newtheorem{prop}[thm]{Proposition}
\newtheorem{cor}[thm]{Corollary}
\newtheorem{example}[thm]{Example}

\endgroup

\theoremstyle{definition}
\begingroup
\newtheorem{defn}[thm]{Definition}
\newtheorem{rem}[thm]{Remark}
\endgroup

\graphicspath{{Pictures/}}

\numberwithin{equation}{section}

\newcommand{\res}{\mathop{\hbox{\vrule height 7pt width .5pt depth 0pt
\vrule height .5pt width 6pt depth 0pt}}\nolimits}

\newcommand{\N}{\mathbb N} 
\newcommand{\R}{\mathbb R}

\newcommand{\Mmn}{{\mathbb M}^{m{\times}n}}
\newcommand{\Ms}{{\mathbb M}^{n{\times}n}_{\rm sym}}
\newcommand{\Mstwo}{{\mathbb M}^{2{\times}2}_{\rm sym}}

\newcommand{\Mskew}{{\mathbb M}^{n{\times}n}_{\rm skew}}

\newcommand{\wto}{\rightharpoonup}
\newcommand{\e}{\varepsilon}

\newcommand{\LL}{{\mathcal L}}
\newcommand{\HH}{{\mathcal H}}
\newcommand{\M}{{\mathcal M}}
\newcommand{\C}{{\mathcal C}}

\newcommand{\Svar}[1]{{\left|#1\right|_{\rm S}}}
\newcommand{\Ssym}[1]{{\left|#1\right|_{\rm sym-S}}}

\DeclareMathOperator{\tr}{tr}

\DeclareMathOperator{\dive}{div}

\DeclareMathOperator{\diam}{{\rm diam}}

\let\O=\Omega

\setlength{\textheight}{21cm} \setlength{\textwidth}{15cm}
\setlength{\parindent}{0.4cm} \setlength{\topmargin}{0cm}
\setlength{\oddsidemargin}{0.8cm} \setlength{\evensidemargin}{0.8cm}

\newcommand{\fla}[1]{{\color{red}#1}}

\begin{document}
 \begin{center}
 	{\Large Piecewise rank-one approximation of vector fields with measure derivatives}\\[5mm]
 	{\today}\\[5mm]
 	Jean-Fran\c cois Babadjian$^{1}$ and Flaviana Iurlano$^{2}$\\[2mm]
 	{\em $^{1}$ Universit\'e Paris-Saclay, CNRS,  Laboratoire de math\'ematiques d'Orsay, 91405, Orsay, France
 	}\\[1mm]
 	{\em $^{2}$ Sorbonne Université, CNRS, Université Paris Cité, Laboratoire Jacques-Louis Lions, 75005 Paris, France}
 	\\[3mm]
 	\begin{minipage}[c]{0.8\textwidth}
 	{\small \textbf{Abstract.} 
	{This work adresses the question of density of piecewise constant (resp. rigid) functions in the space of vector valued functions with bounded variation (resp. deformation) with respect to the strict convergence. Such an approximation property cannot hold when considering the usual total variation in the space of measures associated to the standard Frobenius norm in the space of matrices. It turns out that oscillation and concentration phenomena interact in such a way that the Frobenius norm has to be homogenized into a (resp. symmetric) Schatten-1 norm which coincides with the Euclidean norm on rank-one (resp. symmetric) matrices. By means of explicit constructions consisting of the superposition of sequential laminates, the validity of an optimal approximation property is established at the expense of endowing the space of measures with a total variation associated with the homogenized norm in the space of matrices.}}

 	\end{minipage}
 \end{center}
%

\date{\today}
%
%
%

\section{Introduction}
Functions of bounded variation ($BV$) and of bounded deformation ($BD$) have shown to be the right mathematical setting of many applied problems, including problems from image segmentation, optimization, fracture {and} plasticity. This has led in the last decades to a many-sided and fine analysis of their properties. One of the first questions, motivated both by theoretical usefulness and by numerical application, is that of the approximation of $BV$ and $BD$ functions by {a more restricted class of} functions. The choice of the convergence and the regularity of the approximants may change with the context \cite{BraidesChiadoPiat,cortesani1997strong,CortesaniToader,AmarDeCicco,BellettiniChambolleGoldman,KristensenRindler,DePhilippisFuscoPratelli,CFI2023,chambolle,chambolle05,iur12,ContiFocardiIurlano2017IntRepr,friedrich2018piecewise,ContiFocardiIurlano2019-Density,ChambolleCrismale,Crismale}.

A classical result in the theory of scalar $BV$-functions states that any function $u \in BV(\O)$ in an open set $\O \subset \R^n$ can be approximated by a sequence of simple $BV$-functions with respect to the strict convergence, that is the strong $L^1$-convergence of functions together with the convergence of the total variation of their derivatives. The proof rests on the Fleming-Rishel coarea formula, and consequently does not easily apply to the vectorial setting, where the total variation is intended as the extension to measures of the Frobenius norm of matrices. In fact, it turns out that it is not possible to strictly approximate vector-valued $BV$ functions by piecewise constant {ones}, and not even if the convergence of the total variation of the derivatives is replaced by the convergence of their area-variation, or of any other strongly convex variation coinciding with the standard variation on singular rank-one measures (see respectively \cite{ambrosio1990}, \cite[Remark 22]{AABU}, \cite[Proposition 4]{KristensenRindler}, or Example \ref{ex} below).

The first object of this work is to show that such a density result holds for bounded variation vector fields, once one replaces the convergence of the total variation of the derivatives by the convergence of their Schatten variation, which extends to measures the Schatten-1 norm of matrices. The latter is given by the sum of the singular values of a matrix and therefore coincides with the Frobenius norm on rank-one matrices, but it is not strictly convex (we refer to Section \ref{sec:notation} for the explicit definition). 

\medskip

The second object of this work is to establish a similar result in the $BD$ framework. In this case the approximants are piecewise rigid motions and the variation is a nontrivial adaptation of the Schatten one, now applying to eigenvalues in place of singular values, and distinguishing between their sign. In particular it coincides with the standard variation on singular rank-one-symmetric measures (see again Section \ref{sec:notation}).

In fact, in the vectorial setting, it turns out that oscillation effects interact with concentrations, leading to homogenization of the norm with respect to which the total variation is taken. The norm has therefore to be changed into a homogenized one which coincides with the usual (Frobenius) norm on ``elementary'' objects. This homogenization phenomenon was already observed in earlier homogenization works in several contexts \cite{MT,KS,FraMur,AK1,AK2,AF,AL,A,Bouchitte,BIR1,BIR2}, {\it a priori} unrelated to the present study. A density result in the spirit of the present work has been recently established in \cite{AABU,ABC} in the context of functions with bounded Hessian, involving the corresponding Schatten variation of the Hessian matrix. Let us also mention that relaxation of the total variation of locally simple functions has been used in \cite[Example 3.5]{ambrosio1990} and \cite[Section 3.2]{BrenaNobiliPasqualetto} to give an alternative notion of $BV$-maps. A relaxation result in the same spirit, but in the $BD$ framework, is obtained in \cite[Theorem 2.1]{BDFV} for a functional defined on piecewise smooth functions with divergence in $L^2$. The link between the corresponding relaxation formula and the symmetric Schatten-$1$ norm is made explicit in \cite[Proposition 3.7]{BIR1}.

\medskip

These results lead to the following alternative expression of the Schatten-$1$ norm as well as of its symmetric version, respectively by means of convexification formulas (see Lemmas \ref{lem:Schatten1} and \ref{lem:Schatten1-sym}) 
\[\Svar{A}=(|\cdot|+I_{\Lambda_\text{curl}})^{**}(A), \quad \text{for}\quad A\in \Mmn,\] 
\[\Ssym{A}=(|\cdot|+I_{\Lambda_\text{curl curl}})^{**}(A), \quad \text{for}\quad A\in \Ms,\] 
where $|\cdot|$ is the Frobenius norm and, given a set $E$, $I_E$ stands for the indicator function of $E$ (taking the value $0$ in $E$ and $+\infty$ outside), and the double-star stands for the bi-convex conjugate (the double Legendre transform). In the previous expression $\Lambda_\text{curl}$ (resp. $\Lambda_\text{curl curl}$) denotes the wave cone (see \cite{DPR}) associated to the ${\rm curl}$ (resp. ${\rm curl\, curl}$) operator which is related to $BV$ (resp. $BD$) functions through the formula
${\rm curl}(Du)=0$ in $\mathcal D'(\O;\R^m)$ for all $u \in BV(\O;\R^m)$ (resp. ${\rm curl\, curl}(Eu)=0$ in $\mathcal D'(\O;\R^n)$ for all $u \in BD(\O)$). It translates pieces of information of the differential operator ${\rm curl}$ (resp. ${\rm curl\, curl}$) in the Fourier space. The theory of compensated compactness (see \cite{Tartar}) makes it explicit
$$\Lambda_\text{curl}=\{a \otimes b: \; a \in \R^m, \, b \in \R^n\}, \quad \Lambda_\text{curl curl}=\{a \odot b: \; a, b \in \R^n\}.$$

In view of the previous discussion, the results presented in this paper are somehow natural, but we were not able to find them in the literature. Our approach is direct, indeed the construction only relies on explicit {superposition of} sequential laminates, constructed on the basis of these last convexification formulas. Using standard approximation results of $BV$ or $BD$ functions by smooth ones, we can easily reduce to the case where the initial function is continuous, piecewise affine (on a fixed partition of $\R^n$ made of $n$-simplexes) and compactly supported. Then, working separately inside each $n$-simplex, the overall idea consists in considering laminates in suitable directions related to the singular values (eigenvalues in the $BD$ case, resp.) of the (symmetric) gradient of $u$, which is a constant matrix. By construction, such sequential laminates are piecewise rank-one (symmetric), hence the Frobenius variation and the (symmetric) Schatten variation of the derivatives coincide. It turns out that the optimal directions strongly depend on the singular vectors (on the sign of the eigenvalues and then on a combination of the eigenvectors, resp.), which leads to a loss of isotropy and a change of norm. In the limit the relaxed variations appear, that is, the Schatten variation in the $BV$ case and its symmetric version in the $BD$ case.

By lower semicontinuity, $\Svar{\cdot}$ and $\Ssym{\cdot}$ are the only variations coinciding with the standard variation respectively on rank-one and symmetric rank-one objects for which an approximation of the type above holds, see Propositions \ref{unique} and \ref{prop:uniqueBV} for more details.

\section{Notation}\label{sec:notation}

\subsection{Matrices}

Let $\Mmn$ be the set of all real $m \times n$ matrices. We recall that, given two matrices $A$, $B \in \Mmn$, the Frobenius scalar product is defined by $A : B = \tr(A^T B)$
and the associated Frobenius norm is given by $|A|:=\sqrt{A:A}$. If $A \in \Mmn$, the singular values $s_1,\ldots,s_n\geq 0$ of $A$ are defined as the eigenvalues of the (symmetric) matrix $\sqrt{A^TA} \in \Ms$. For every $p \in [1,\infty]$, the Schatten $p$-norm of $A$ is defined by
$$|A|_{S,p}:=\|(s_1,\ldots,s_n)\|_{\ell^p}.$$
Note that for $p=1$ (the case of interest in the sequel), we set $\Svar{A}:=|A|_{S,1}$ and we have
$$\Svar{A}=\tr\left(\sqrt{A^T A}\right),$$
while for $p=\infty$,
$$|A|_{S,\infty}= \varrho\left(\sqrt{A^T A}\right),$$
where $\varrho$ denotes the spectral radius.

Given $a \in \R^m$ and $b \in \R^n$, we denote by $a\otimes b:=ab^T \in \Mmn$ the tensor product between $a$ and $b$. We observe that 
\begin{equation}\label{eq:S1=}
\Svar{a \otimes b}=|a||b|=|a \otimes b|.
\end{equation}

\medskip

One has the following convexification formula for the $1$-Schatten norm.
\begin{lem}\label{lem:Schatten1} 
For all $A \in \Mmn$, we have
$$\Svar{A}=\sup\{\psi : \Mmn \to \R \text{ convex such that }\psi(a \otimes b) \leq |a \otimes b| \text{ for all }a \in \R^m, \, b \in \R^n\}.$$
\end{lem}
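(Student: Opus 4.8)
The plan is to establish the two inequalities separately, writing $\Phi(A)$ for the supremum on the right-hand side. The inequality $\Svar{A}\le\Phi(A)$ is obtained by exhibiting $\Svar{\cdot}$ itself as an admissible competitor: the Schatten-$1$ norm is a norm on $\Mmn$, hence a (finite) convex function, and by \eqref{eq:S1=} it satisfies $\Svar{a\otimes b}=|a\otimes b|$ for all $a\in\R^m$, $b\in\R^n$; so $\Svar{\cdot}$ belongs to the family over which the supremum is taken, and evaluating at $A$ gives $\Svar{A}\le\Phi(A)$. (If one prefers not to invoke that the nuclear norm is a norm, one can instead take the linear competitor $\psi(X)=B:X$, where $B$ is read off a singular value decomposition of $A$ so that $B:A=\Svar{A}$ and the operator norm of $B$ is at most $1$; then $\psi(a\otimes b)=a^T B b\le |a||b|=|a\otimes b|$ by \eqref{eq:S1=}, so $\psi$ is admissible and $\psi(A)=\Svar{A}$.)

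For the reverse inequality $\Phi(A)\le\Svar{A}$, I would fix an arbitrary convex $\psi:\Mmn\to\R$ with $\psi(a\otimes b)\le|a\otimes b|$ for all $a,b$, and show $\psi(A)\le\Svar{A}$. Write a singular value decomposition $A=\sum_{i=1}^n s_i\,u_i\otimes v_i$, where $s_1,\dots,s_n\ge 0$ are the singular values of $A$, $(v_i)$ is an orthonormal basis of $\R^n$, and $(u_i)$ are unit vectors of $\R^m$; then $\Svar{A}=\sum_i s_i$ and $|u_i\otimes v_i|=1$. The key point is to display $A$ not merely as a sum but as a genuine convex combination of rank-one matrices carrying controlled Frobenius mass: setting $R_i:=(n s_i u_i)\otimes v_i$, each $R_i$ is rank-one with $|R_i|=n s_i$, and $A=\sum_{i=1}^n\frac1n R_i$ with equal weights $\frac1n$. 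Convexity of $\psi$ (Jensen's inequality for a finite convex combination of points of its domain) together with the constraint then gives
\[
\psi(A)\;=\;\psi\Big(\sum_{i=1}^n\tfrac1n R_i\Big)\;\le\;\sum_{i=1}^n\tfrac1n\,\psi(R_i)\;\le\;\sum_{i=1}^n\tfrac1n\,|R_i|\;=\;\sum_{i=1}^n s_i\;=\;\Svar{A}.
\]
Taking the supremum over all admissible $\psi$ yields $\Phi(A)\le\Svar{A}$, and combining with the first step proves the lemma. The degenerate cases are harmless: a vanishing singular value contributes $R_i=0=0\otimes 0$, and $A=0$ is covered directly since $\psi(0)=\psi(0\otimes 0)\le 0=\Svar{0}$.

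The only mildly delicate point is the reverse inequality, and there the essential observation is the rescaling $s_i(u_i\otimes v_i)=\frac1n\big((ns_i u_i)\otimes v_i\big)$: since rank-one matrices form a cone (stable under scaling), the factor $n$ can be absorbed into the rank-one structure, so that $A$ appears as an \emph{equal-weight} convex combination of rank-one matrices whose weighted Frobenius norms sum exactly to $\Svar{A}=\sum_i s_i$ — note that using the raw decomposition $A=\sum_i s_i(u_i\otimes v_i)$ directly would only yield $\psi(A/\Svar{A})\le 1$, which is not enough since $\psi$ is not assumed positively homogeneous. This manoeuvre is exactly what turns "$\psi\le|\cdot|$ on rank-one matrices" into "$\psi\le\Svar{\cdot}$ everywhere", and it is the finite-dimensional incarnation of the identity $\Svar{\cdot}=(|\cdot|+I_{\Lambda_\text{curl}})^{**}$ recalled in the introduction; no compensated-compactness input is needed, only the singular value decomposition and elementary convexity.
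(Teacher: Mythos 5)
Your proof is correct, but for the nontrivial inequality it takes a genuinely different route from the paper's. The easy bound $\Svar{A}\le\Phi(A)$ is obtained in both cases by observing that $\Svar{\cdot}$ is itself an admissible competitor. For the reverse inequality the paper works on the dual side: it introduces the function $H$ equal to $|a\otimes b|$ on rank-one matrices and $+\infty$ elsewhere, computes its convex conjugate $H^*=I_B$ with $B$ the closed unit ball of the Schatten-$\infty$ norm, and identifies $\Phi=H^{**}$ with the support function of $B$, i.e.\ with $\Svar{\cdot}$, by invoking the duality between the Schatten $1$- and $\infty$-norms (via the cited reference). You instead argue on the primal side: the singular value decomposition exhibits $A$ as the equal-weight convex combination $A=\sum_{i=1}^n\tfrac1n\bigl((ns_iu_i)\otimes v_i\bigr)$ of rank-one matrices of Frobenius norm $ns_i$, and Jensen's inequality together with the constraint yield $\psi(A)\le\sum_i s_i=\Svar{A}$ for every admissible $\psi$. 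Your rescaling remark --- absorbing the weight into the rank-one factor because $\psi$ is not assumed homogeneous --- is precisely the point that makes this work, and the degenerate cases ($s_i=0$, $A=0$) are handled correctly. What each approach buys: yours is fully self-contained and elementary, requiring only the SVD and finite convexity, and in effect computes the convex envelope of $H$ at $A$ by hand; the paper's conjugate computation is shorter on the page but outsources the identification of the bipolar to the known Schatten duality. The two are of course two faces of the same biconjugation $\Svar{\cdot}=H^{**}$.
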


\begin{proof}
Let us denote by $h(A)$ the above supremum. Since $\Svar{\cdot}$ is convex and satisfies \eqref{eq:S1=}, we have that $h(A) \geq \Svar{A}$. We are thus back to show the reverse inequality. For all $A \in \Mmn$, let
$$H(A):=
\begin{cases}
|a \otimes b| & \text{ if }A=a \otimes b,\\
+\infty & \text{ otherwise.}
\end{cases}$$
Let us compute the convex conjugate of $H$. For all $X \in \Mmn$, we have
\begin{eqnarray*}
H^*(X) & = & \sup_{t>0} \sup_{|a|=|b|=1} t \{X:(a \otimes b) - |a \otimes b|\}\\
& = & \sup_{t>0} \sup_{|a|=|b|=1} t \{(Xa)\cdot b) -1\}\\
 & = & \sup_{t>0} \sup_{|a|=1} t \{|Xa|-1\}\\
  & = & \sup_{t>0} t \left\{\sqrt{\varrho(X^T X)}-1\right\}=I_{B}(X),
\end{eqnarray*}
where $B$ is the closed unit ball in $\Mmn$ with respect to the Schatten $\infty$-norm. As a consequence, $h=H^{**}$ (which is the convex envelope of $H$) coincides with the support function of $B$, which is nothing but the Schatten $1$-norm (see e.g. \cite[Proposition 1]{AU}).
\end{proof}

\medskip

We denote by $\Ms$ and $\Mskew$ the subspaces of $\mathbb M^{n \times n}$ of symmetric and skew-symmetric matrices, respectively. If $A \in \Ms$ is symmetric and $\lambda_1,\ldots,\lambda_n \in \R$ are the eigenvalues of $A$, then $s_i=|\lambda_i|$. We consider the function $\Ssym{\, \cdot\, } :\Ms \to \R^+$ introduced in \cite{BIR1} and defined, for all $\xi \in \Ms$, by
$$\Ssym{A}:=\sqrt{\frac12\left(\sum_{i=1}^n |\lambda_i|\right)^2 + \frac12\left(\sum_{i=1}^n \lambda_i \right)^2}=\sqrt{\frac12 \Svar{A}^2 + \frac12(\tr(A))^2}.$$
This function is a norm over $\Ms$, in particular, it is a continuous, Lipschitz and positively one-homogeneous function. It will be referred to as the symmetric Schatten $1$-norm and it can alternatively be written as
\begin{equation}\label{posneg}
\Ssym{A}=\sqrt{\left(\sum_{i=1}^l \lambda_i\right)^2 + \left(\sum_{i=l+1}^n \lambda_i \right)^2},
\end{equation}
where $\lambda_1\leq\dots\leq\lambda_l\leq0<\lambda_{l+1}\leq\dots\leq\lambda_n$. In dimension $n=2$, it further reduces to
$$\Ssym{A}=\sqrt{|A|^2+2(\det A)^+} \quad \text{ for all }A \in \Mstwo.$$
If $a$ and $b \in \R^n$, we denote by $a \odot b=(a\otimes b +b \otimes a)/2 \in \Ms$ the symmetric tensor product between $a$ and $b$. Since, by \cite[Lemma 2.1]{BIR1}, such a matrix has rank at most $2$ and both nonzero eigenvalues have opposite signs, we infer that 
\begin{equation}\label{eq:hsym}
\Ssym{a \odot b}=|a \odot b|.
\end{equation}

One has a similar convexification formula for this symmetric $1$-Schatten norm. The proof directly follows from \cite[Proposition 3.7]{BIR1}.
\begin{lem}\label{lem:Schatten1-sym} 
For all $A \in \Ms$, we have
$$\Ssym{A}=\sup\{\psi : \Ms \to \R \text{ convex such that }\psi(a \odot b) \leq |a \odot b| \text{ for all }a, \, b \in \R^n\}.$$
\end{lem}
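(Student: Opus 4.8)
The strategy is to mimic the proof of Lemma~\ref{lem:Schatten1}, replacing the tensor product $a\otimes b$ by the symmetric one $a\odot b$ and the Frobenius geometry of $\Mmn$ by that of $\Ms$. Denote by $h(A)$ the supremum in the statement. Since $\Ssym{\cdot}$ is a norm, hence convex, and since $\Ssym{a\odot b}=|a\odot b|$ by \eqref{eq:hsym}, the function $\Ssym{\cdot}$ is itself admissible in the supremum, so $h(A)\ge\Ssym{A}$ for every $A\in\Ms$; the point is the converse inequality $h\le\Ssym{\cdot}$.

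For the converse I would introduce $H\colon\Ms\to\R\cup\{+\infty\}$,
\[
H(A):=\begin{cases}|a\odot b|&\text{if }A=a\odot b\text{ for some }a,b\in\R^n,\\ +\infty&\text{otherwise.}\end{cases}
\]
Any admissible $\psi$ is a finite convex — hence continuous, hence lower semicontinuous — minorant of $H$, so $\psi\le H^{**}$; conversely $H^{**}$ will be seen to be finite, hence admissible, so that $h=H^{**}$, the convex envelope of $H$, exactly as in Lemma~\ref{lem:Schatten1}. To compute the convex conjugate $H^{*}$, I use that $X:(a\odot b)=X:(a\otimes b)=(Xa)\cdot b$ for symmetric $X$, together with the positive $1$-homogeneity of $a\mapsto(Xa)\cdot b$ and of $a\mapsto|a\odot b|$ (and of the corresponding maps in $b$): for every $X\in\Ms$,
\[
H^{*}(X)=\sup_{a,b\in\R^n}\bigl\{(Xa)\cdot b-|a\odot b|\bigr\}=I_K(X),\qquad K:=\bigl\{X\in\Ms:\ (Xa)\cdot b\le|a\odot b|\ \text{ for all }a,b\in\R^n\bigr\},
\]
the supremum being $0$ when $X\in K$ (take $a=0$) and $+\infty$ otherwise (rescale a pair violating the inequality). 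Testing with $b=\pm a$ shows $K\subseteq\{X\in\Ms:\ |X|_{S,\infty}\le1\}$, so $K$ is a nonempty bounded closed convex set, and therefore $h=H^{**}=(I_K)^{*}=\sigma_K$, the finite support function of $K$.

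It then remains to prove $\sigma_K=\Ssym{\cdot}$ on $\Ms$, i.e. that $K$ coincides with the closed unit ball $B^{*}$ of the norm on $\Ms$ dual to $\Ssym{\cdot}$. The inclusion $B^{*}\subseteq K$ is immediate by testing the definition of $B^{*}$ against $A=a\odot b$ and using \eqref{eq:hsym}. The reverse inclusion $K\subseteq B^{*}$ — the statement that the constraints carried by the symmetric rank-one matrices $a\odot b$ already cut out the full dual ball — is exactly the content of \cite[Proposition~3.7]{BIR1}, which is what makes the lemma ``directly follow'' from that reference. Alternatively it can be done by hand: the von Neumann--Ky Fan trace inequality for symmetric matrices shows that the dual norm $\|X\|_{*}=\sup\{X:A:\ \Ssym{A}\le1\}$ is attained at some $A$ diagonal in an eigenbasis of $X$, whence, by \eqref{posneg}, $\|X\|_{*}=\sqrt{((\mu_{\max})_{+})^{2}+((\mu_{\min})_{-})^{2}}$ in terms of the largest and smallest eigenvalues $\mu_{\max}\ge\mu_{\min}$ of $X$; since the two nonzero eigenvalues of any $a\odot b$ have opposite signs (by \cite[Lemma~2.1]{BIR1}), an elementary optimization of $(Xa)\cdot b=X:(a\odot b)$ over such matrices recovers the very same bound, giving $X\in K\Rightarrow\|X\|_{*}\le1$. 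I expect this last identification to be the only real obstacle: contrary to Lemma~\ref{lem:Schatten1}, where $K$ is the usual Schatten-$\infty$ ball and $\sigma_K$ the Schatten-$1$ norm by a classical fact, here both $\Ssym{\cdot}$ and its dual are non-standard norms, and one must show that restricting the test matrices to the ``hyperbolic'' two-dimensional directions $a\odot b$ still recovers the whole dual unit ball.
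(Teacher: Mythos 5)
Your proposal is correct. The paper itself disposes of this lemma in one line, by citing \cite[Proposition 3.7]{BIR1}; you instead transplant the biconjugation argument of Lemma \ref{lem:Schatten1} to the symmetric setting and isolate precisely where that transplant stops being routine, namely the identification of $K=\{X\in\Ms:\ X:(a\odot b)\le|a\odot b|\ \text{for all }a,b\}$ with the unit ball of the norm dual to $\Ssym{\cdot}$. All the intermediate steps check out: $X:(a\odot b)=(Xa)\cdot b$ for symmetric $X$, the separate $1$-homogeneity in $a$ and in $b$ forces $H^*=I_K$, testing with $b=\pm a$ gives $\varrho(X)\le1$ so that $K$ is compact and $h=H^{**}=\sigma_K$ is finite. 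Your hands-on closing argument is also sound: by the Ky Fan trace inequality the dual norm is computed on matrices diagonal in an eigenbasis of $X$ and equals $\sqrt{((\mu_{\max})_+)^2+((\mu_{\min})_-)^2}$, and since (by \cite[Lemma 2.1]{BIR1}) the matrices $a\odot b$ are exactly the symmetric matrices of rank at most two whose nonzero eigenvalues have opposite signs, testing $X$ against $s\,f_+\odot f_+ - t\,f_-\odot f_-$ (suitably rewritten as a single $a\odot b$, with $f_\pm$ extremal eigenvectors of $X$ and $(s,t)$ on the unit circle) recovers the same quantity, so $K\subseteq B^*$. The net effect is that your proof is self-contained where the paper's is a citation; what you lose is brevity, what you gain is an explicit dual description of $\Ssym{\cdot}$ that the paper never writes down.
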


\subsection{Measures}

The Lebesgue measure in $\R^n$ is denoted by $\mathcal L^n$, and the $(n-1)$-dimensional Hausdorff measure by $\mathcal H^{n-1}$. If $\Omega \subset \R^n$ is an open set and $X$ is an Euclidean space, we denote by $\mathcal M(\O;X)$ the space of $X$-valued bounded Radon measures in $\O$.
Fixed a norm $\|\cdot\|$ on $X$, we can define the variation of $\mu\in\mathcal M(\O;X)$ with respect to $\|\cdot\|$ as 
\[\|\mu\|(A):=\sup\sum_{i=1}^k\|\mu(A_i)\|,\]
for all Borel set $A\subset\Omega$, where the supremum is taken over all finite Borel partitions $A=\bigcup_{i=1}^k A_i$ of $A$. 
In particular, $\|\mu\|(\O)$ is a norm on $\mathcal M(\O;X)$. Frobenius variation will be denoted by $|\cdot|$.
In the sequel, we will consider the cases $X=\mathbb M^{m \times n}$, $X=\Ms$ and $X=\R$. When $X=\R$, we simply write $\mathcal M(\O)$ instead of $\mathcal M(\O;\R)$.
	
Let $\mu \in \mathcal M(\O;X)$ and $f:X \to [0,+\infty)$ be a convex, positively one-homogeneous function.  Using the theory of convex functions of measures developed in \cite{GS}, we introduce the nonnegative finite Borel measure $f(\mu)$, defined by 
$$f(\mu):=f\left(\frac{d \mu}{d |\mu|}\right)|\mu|\,,$$
where $\frac{d \mu}{d |\mu|}$ stands for the Radon-Nikod\'ym derivative of $\mu$ with respect to $|\mu|$.
	
\subsection{Functional spaces}

The space of (vector valued) functions of bounded variation is defined by
$$BV(\O;\R^m)=\{u \in L^1(\O;\R^m): \; Du \in \mathcal M(\O;\Mmn)\}.$$
We refer to \cite{AFP} for notation and general properties of that space. We further consider the space of functions of bounded deformation introduced in \cite{Suquet} defined by
$$BD(\Omega)=\{u \in L^1(\Omega;\R^n) : \; Eu:=(Du+Du^T)/2\in \mathcal M(\Omega;\Ms)\}.$$
We refer to \cite{ACDM,Temam} for a general treatment of that space.

\section{Symmetric Schatten mass approximation of functions of bounded deformation}

\begin{defn}
A function $u:\R^n \to \R^n$ is a {\it piecewise rigid body motion} if there exist a countable family $\{P_i\}_{i \in \N}$ of polyhedral sets with pairwise disjoint interiors such that  $\R^n= \bigcup_{i\in \N} P_i$, skew symmetric matrices $\{A_i\}_{i \in \N} \subset \Mskew$ and vectors $\{b_i\}_{i \in \N} \subset \R^n$ such that for all $i \in \N$,
$$u(x)=A_i x + b_i \quad \text{ for all }x \in P_i.$$
{We denote by $PR(\R^n)$ the space of all such piecewise rigid body motions.}
\end{defn}

\begin{rem}\label{rem:pwcr-jump1}
Note that if {$u \in PR(\R^n)$}, then its symmetric gradient $Eu$ is a pure jump measure concentrated on $\bigcup_{i,j \in \N} (\partial P_i \cap \partial P_j)$. Moreover,
$$Eu\res (\partial P_i \cap \partial P_j)= \left((u|_{P_j})^+- (u|_{P_i})^-\right) \odot \nu \HH^{n-1} \res (\partial P_i \cap \partial P_j),$$
where $\nu$ is the unit vector orthogonal to $\partial P_i \cap \partial P_j$ oriented from $P_i$ to $P_j$, and $(u|_{P_j})^+$ (resp. $ (u|_{P_i})^-$) is the trace of $u|_{P_j}$ (resp. $ u|_{P_i}$) on $\partial P_i \cap \partial P_j$. In particular, \eqref{eq:hsym} ensures that $\Ssym{Eu}=|Eu|$ as measures in $\R^n$.
\end{rem}

Our first main result is the following approximation of functions of bounded deformation.

\begin{thm}\label{thm:BD}
Let $\O$ be a bounded open set of $\R^n$ with Lipschitz boundary and $u \in BD(\O)$. Then, there exists a sequence $\{u_k\}_{k \in \N}$ {in $PR(\R^n)$} such that
$$
\begin{cases}
u_k \to u \quad \text{strongly in }L^1(\O;\R^n),\\
\Ssym{Eu_k}(\O) \to \Ssym{Eu}(\O).
\end{cases}
$$
\end{thm}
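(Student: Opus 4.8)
The plan is to proceed by a three-step reduction followed by an explicit lamination construction. First, by standard density results for $BD$ (mollification together with the strict-density of $BD \cap C^\infty$, see \cite{ACDM,Temam}) and a diagonal argument, it suffices to prove the claim when $u$ is continuous and piecewise affine on a fixed triangulation of $\R^n$ into $n$-simplexes, and moreover compactly supported; indeed strict convergence in $BD$ already implies $|Eu_j|(\O) \to |Eu|(\O)$, and since $\Ssym{\cdot}$ is a norm equivalent to $|\cdot|$ the functional $v \mapsto \Ssym{Ev}(\O)$ is continuous along strictly converging sequences of piecewise affine maps only up to an error we must control — so in fact the reduction must be done carefully so that $\Ssym{Eu_j}(\O) \to \Ssym{Eu}(\O)$ as well, which follows because on a piecewise affine map $Eu$ is a sum of a Lebesgue part (on the simplexes) plus an $\HH^{n-1}$ jump part (on the faces), and $\Ssym{\cdot}$ is continuous on matrices. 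Second, fix one $n$-simplex $T$ on which $Eu$ is a constant symmetric matrix $\xi \in \Ms$; the whole problem localizes to building, inside $T$, a sequence of piecewise rigid motions $u_k$ with $u_k \to u$ in $L^1(T;\R^n)$, with the correct boundary trace on $\partial T$ (to glue across simplexes), and with $\Ssym{Eu_k}(T) \to \Ssym{\xi}\,\LL^n(T)$.

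The heart of the matter, and the step I expect to be the main obstacle, is the construction on a single simplex. By Lemma \ref{lem:Schatten1-sym} the value $\Ssym{\xi}$ is exactly the convexification of $|\cdot|$ subject to equality on symmetric rank-one tensors $a \odot b$; dualizing, there is a probability-like decomposition realizing $\xi$ as a limit of convex combinations of matrices of the form $c_j\, a_j \odot b_j$ whose Frobenius masses sum (in the limit) to $\Ssym{\xi}$. Concretely, using the eigendecomposition $\xi = \sum_i \lambda_i\, e_i \otimes e_i$ and the splitting \eqref{posneg} into the negative block (eigenvalues $\lambda_1 \le \dots \le \lambda_l \le 0$) and the positive block ($0 < \lambda_{l+1} \le \dots \le \lambda_n$), one writes $\xi$ as a sum of two rank-one-symmetric "building blocks" $p \odot q$ and $p' \odot q'$ (this is precisely the content of formula \eqref{posneg}: $\Ssym{\xi}^2 = (\sum_{i\le l}\lambda_i)^2 + (\sum_{i>l}\lambda_i)^2$ is the squared norm of a vector with two components, each component being the trace of one block, and each block, having eigenvalues of both signs after a suitable shift, is $|\cdot| = \Ssym{\cdot}$ on it). I would then build, by superposition of sequential laminates in the two directions dictated by these building blocks, a piecewise rigid $u_k$ on $T$: laminating in direction $q$ with layers where $Du_k$ jumps by multiples of $p$ produces a symmetric gradient whose Lebesgue-average is the first block and whose jump part is carried by $\odot$-tensors, so that $\Ssym{Eu_k} = |Eu_k|$ as measures (Remark \ref{rem:pwcr-jump1}), while refining the lamination makes $u_k \to u$ uniformly on $T$; iterating (superposing) the second lamination inside each layer of the first recovers the full $\xi$.

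Finally I would estimate the total variation: because each $u_k$ is piecewise rigid, $|Eu_k| = \Ssym{Eu_k}$, so it is enough to show $|Eu_k|(T) \to \Ssym{\xi}\,\LL^n(T)$; the upper bound $\limsup_k |Eu_k|(T) \le \Ssym{\xi}\,\LL^n(T)$ comes from the explicit layer count of the sequential laminate (the masses of the jumps along the two families of hyperplanes add up, in the fine limit, to exactly the two-component Euclidean norm in \eqref{posneg}, by choosing the lamination directions $q, q'$ so that $|p||q| + |p'||q'|$ realizes $\Ssym{\xi}$ rather than $|\xi|$ — this is where the loss of isotropy and the appearance of the homogenized norm enters), and the lower bound is automatic from lower semicontinuity of the Frobenius variation under $L^1$-convergence together with $|E u|(T) = |\xi|\LL^n(T) \ge$ nothing useful, so instead one uses lower semicontinuity of $\Ssym{E\cdot}$ (valid since $\Ssym{\cdot}$ is convex, positively $1$-homogeneous) to get $\liminf_k \Ssym{Eu_k}(\O) \ge \Ssym{Eu}(\O)$ globally, matching the upper bound obtained simplex by simplex and summed over the triangulation. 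Care must be taken at the interfaces $\partial P_i \cap \partial P_j$ between simplexes: one arranges the laminate in $T$ to have trace equal to $u|_{\partial T}$ on each face, so that the glued global $u_k$ lies in $PR(\R^n)$ and its jump on the common faces of the original triangulation contributes exactly the jump part of $Eu$ there, which is already of $\odot$-type.
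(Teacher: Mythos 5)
There is a genuine gap at the heart of your single-simplex construction. You claim that, via \eqref{posneg}, any $\xi\in\Ms$ can be written as a sum of \emph{two} symmetric rank-one blocks $p\odot q+p'\odot q'$ with $|p\odot q|+|p'\odot q'|=\Ssym{\xi}$, the two blocks being (essentially) the negative and positive spectral parts of $\xi$. This is false. A matrix of the form $a\odot b$ has rank at most $2$ and its nonzero eigenvalues have opposite signs, so the negative block $\sum_{i\le l}\lambda_i e_i\otimes e_i$ (resp.\ the positive block) is \emph{not} of the form $a\odot b$ as soon as it has two or more nonzero eigenvalues; and no two-block decomposition of any kind can exist in general: if $\xi=a\odot b+a'\odot b'$ then ${\rm rank}\,\xi\le 4$, which already rules out $\xi={\rm Id}$ in dimension $n\ge 3$ (indeed writing each block spectrally as $-X+Y$ with $X,Y$ PSD of rank $\le 1$ gives ${\rm Id}+X_1+X_2=Y_1+Y_2$, a rank-$n$ matrix equal to a rank-$\le 2$ one), and rules out any mixed-sign $\xi$ of rank $\ge 5$. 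Formula \eqref{posneg} computes the value of the norm but does not by itself produce a decomposition of $\xi$ into $\odot$-tensors whose Frobenius norms add up to $\Ssym{\xi}$; producing such a decomposition is exactly the technical core of the proof, and it genuinely requires more blocks and a careful choice of weights. The paper does this by an induction on the dimension: if all eigenvalues have one sign one laminates componentwise ($n$ rank-one blocks $\lambda_i e_i\odot e_i$); if all but one share a sign one uses the $n-1$ blocks $A^{(j)}=\lambda_je_j\otimes e_j+\delta_j\lambda_ne_n\otimes e_n$ with $\delta_j=\lambda_j(\sum_l\lambda_l)^{-1}$, each a genuine $\odot$-product, the weights $\delta_j$ being precisely what makes $\sum_j|A^{(j)}|=\Ssym{\xi}$; in the general mixed case one first splits $\xi$ into lower-rank pieces $A^{(j)}$ (each with at least one zero eigenvalue) with weights $a_j$ keeping $\Ssym{\cdot}$ additive, and invokes the inductive hypothesis in dimension $r+1$, also checking that the supporting hyperplane families of the different laminates are mutually $\HH^{n-1}$-essentially disjoint so that the masses add. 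None of this is recoverable from your two-block claim, so the key step of the proof is missing.

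A secondary, also incorrect, point is your treatment of the interfaces: you propose to arrange the laminate in each simplex $T$ to have trace equal to $u|_{\partial T}$, so that the glued function is piecewise rigid and ``contributes exactly the jump part of $Eu$'' on the faces. This cannot be done: the trace of a piecewise rigid function on a face is piecewise rigid, not an arbitrary affine map, and since $u$ is continuous piecewise affine there is no jump part of $Eu$ on the faces to match in the first place. The correct (and simpler) handling, as in the paper's Step 3, is not to match traces at all but to use the uniform estimate $\|u_k-u\|_{L^\infty}\le C/k$ coming from the step functions $s_k$, which bounds the interface contribution by $\Ssym{Eu_k}(S_{ij})\le (C/k)\,\HH^{n-1}(S_{ij})\to 0$, the number of relevant simplexes being finite by compact support. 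Your reduction to continuous piecewise affine compactly supported data and your use of Reshetnyak lower semicontinuity for the lower bound are in line with the paper, but as it stands the proposal does not prove the theorem.
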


The previous approximation result holds for the symmetric Schatten mass $\Ssym{Eu}$ of $Eu$, and not for the usual total variation mass $|Eu|$ as the following counterexample shows (see also \cite[Remark 22]{AABU}, \cite[Proposition 4]{KristensenRindler}, or Example \ref{ex} below for the Schatten $1$-norm).

\begin{example}\label{ex:sym}{\rm
Let $n=2$, $\O=Q=(0,1)^2$ is the unit cube and $u(x)=x$, so that $Eu={\rm Id}$. Assume that there exists a sequences of piecewise rigid body motions $\{u_k\}_{k \in \N}$ such that $u_k \to u$ strongly in $L^1(Q;\R^2)$ and $|Eu_k|(Q) \to |Eu|(Q)$. Since $\Ssym{\cdot}$ is a continuous and positively one-homogeneous function, Reshetnyak continuity Theorem (\cite[Theorem 2.39]{AFP}) ensures that $\Ssym{Eu_k}(Q) \to \Ssym{Eu}(Q)$. But since $u_k$ is piecewise rigid, Remark \ref{rem:pwcr-jump1} shows that
$$\frac{dEu_k}{d|Eu_k|}=\alpha_k \odot \beta_k$$
for some Borel functions $\alpha_k$, $\beta_k:\R^2 \to \R^2$ such that $|\alpha_k \odot \beta_k|=1$ and
$$\Ssym{Eu_k}=\Ssym{\frac{dEu_k}{d|Eu_k|}}|Eu_k|=\Ssym{\alpha_k \odot \beta_k}|Eu_k|.$$
But since by \eqref{eq:hsym}, $\Ssym{\alpha_k \odot \beta_k}=|\alpha_k \odot \beta_k|=1$, then $\Ssym{Eu_k}=|Eu_k|$ and thus $\Ssym{Eu}(Q)=|Eu|(Q)$. This is however not possible since a straightforward computation shows that $|Eu|(Q)=|{\rm Id}|=\sqrt{2}$ while 
$\Ssym{Eu}(Q)=\Ssym{{\rm Id}}=2$.
} 
\end{example}

\begin{rem}
A similar argument  would show that there is no sequence of piecewise rigid functions $u_k$ such that $u_k\to u$ strongly in $L^1(\O;\mathbb{R}^n)$ and $Eu_k$ converges to $Eu$ in mass with respect to a strictly convex norm. This follows from the fact that convergence in mass with respect to a strictly convex norm implies convergence in mass with respect to the Frobenius norm by Reshetnyak Continuity Theorem (see for instance \cite[Theorem 1.1]{Spector} and \cite{LM,Ret}. Note that weak* convergence in $\M(\O;\Ms)$ together with the convergence of the total variation associated to any norm in $\Ms$ implies weak* convergence in $[\C_b(\O;\Ms)]'$).
\end{rem}

This restriction of the admissible norms (for which such an approximation result is valid) is actually much stronger. Indeed, our Theorem \ref{thm:BD} is optimal in the sense that the symmetric Schatten-1 norm is the only possible norm coinciding with the Frobenius one on rank-one symmetric matrices for which such a density result holds.

\begin{prop}\label{unique}
Let $N$ be a norm over $\Ms$ be such that $N(a \odot b)=|a \odot b|$ for all $a$, $b \in \R^n$. Assume for all bounded open set $\O \subset \R^n$ with Lipschitz boundary and all $u \in BD(\O)$, there exists a sequence $\{u_k\}_{k \in \N}$ in $PR(\R^n)$ such that
$$
\begin{cases}
u_k \to u \quad \text{strongly in }L^1(\O;\R^n),\\
N(Eu_k)(\O) \to N(Eu)(\O)
\end{cases}
$$
Then $N=\Ssym{\cdot}$.
\end{prop}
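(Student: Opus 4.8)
The plan is to show that any norm $N$ satisfying the hypotheses must agree with $\Ssym{\cdot}$ on all symmetric matrices, by squeezing $N$ between two bounds that both equal $\Ssym{\cdot}$. The lower bound is immediate from Lemma \ref{lem:Schatten1-sym}: since $N$ is convex (being a norm) and $N(a \odot b) = |a \odot b|$ for all $a,b \in \R^n$, the variational characterization gives $N(A) \le \Ssym{A}$ for all $A \in \Ms$. The heart of the matter is therefore the reverse inequality $N(A) \ge \Ssym{A}$, and this is where the density hypothesis must be used.

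For the reverse inequality, I would argue by contradiction, or rather by a direct lower-semicontinuity argument analogous to Example \ref{ex:sym}. Fix $A \in \Ms$ and consider the linear map $u(x) = Ax$ on a cube $Q = (0,1)^n$, so $Eu = A \, \mathcal L^n \res Q$ and hence $N(Eu)(Q) = N(A)$ and $\Ssym{Eu}(Q) = \Ssym{A}$. Let $\{u_k\} \subset PR(\R^n)$ be the sequence provided by the hypothesis, with $u_k \to u$ in $L^1(Q;\R^n)$ and $N(Eu_k)(Q) \to N(Eu)(Q) = N(A)$. Since each $u_k$ is a piecewise rigid body motion, Remark \ref{rem:pwcr-jump1} tells us that $\frac{dEu_k}{d|Eu_k|} = \alpha_k \odot \beta_k$ $|Eu_k|$-a.e., with $|\alpha_k \odot \beta_k| = 1$; then \eqref{eq:hsym} combined with the equality $N(a\odot b) = |a\odot b|$ forces $N\!\left(\frac{dEu_k}{d|Eu_k|}\right) = 1 = \Ssym{\frac{dEu_k}{d|Eu_k|}}$ $|Eu_k|$-a.e. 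Consequently $N(Eu_k) = |Eu_k| = \Ssym{Eu_k}$ as measures on $\R^n$, and in particular $\Ssym{Eu_k}(Q) = N(Eu_k)(Q) \to N(A)$.

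It remains to pass to the limit in $\Ssym{Eu_k}(Q)$. The convergences $u_k \to u$ in $L^1(Q;\R^n)$ and $\sup_k N(Eu_k)(Q) < \infty$ (equivalently $\sup_k |Eu_k|(Q) < \infty$, since $N$ and $|\cdot|$ are equivalent norms on the finite-dimensional space $\Ms$) give $u_k \to u$ weakly-$*$ in $BD(Q)$, hence $Eu_k \wto Eu$ weakly-$*$ in $\mathcal M(Q;\Ms)$. Since $\Ssym{\cdot}$ is a norm on $\Ms$, it is convex, continuous and positively one-homogeneous, so Reshetnyak lower semicontinuity (see \cite[Theorem 2.38]{AFP}) yields
\[
\Ssym{A} = \Ssym{Eu}(Q) \le \liminf_{k\to\infty} \Ssym{Eu_k}(Q) = \liminf_{k\to\infty} N(Eu_k)(Q) = N(A).
\]
Combining this with the upper bound $N(A) \le \Ssym{A}$ from Lemma \ref{lem:Schatten1-sym} gives $N(A) = \Ssym{A}$ for every $A \in \Ms$, i.e. $N = \Ssym{\cdot}$.

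The only genuinely delicate point is the identity $N(Eu_k) = \Ssym{Eu_k}$ as measures for piecewise rigid $u_k$: one must know that the polar vector $\frac{dEu_k}{d|Eu_k|}$ is a symmetric tensor product $|Eu_k|$-almost everywhere, which is exactly the content of Remark \ref{rem:pwcr-jump1} (the jump part of $Eu_k$ is supported on the interfaces $\partial P_i \cap \partial P_j$, where it has the form $(\llbracket u_k\rrbracket \odot \nu)\,\mathcal H^{n-1}$, and there is no diffuse or Cantor part). Everything else is a routine combination of the convexification lemma, weak-$*$ compactness in $BD$, and Reshetnyak lower semicontinuity; note in particular that we only need the lower semicontinuity half of Reshetnyak's theorem here, not the full continuity statement, since the upper bound is supplied independently by Lemma \ref{lem:Schatten1-sym}.
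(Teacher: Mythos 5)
Your argument is correct and is essentially the paper's own proof: the upper bound $N\leq\Ssym{\cdot}$ comes from the convexification characterization (you invoke Lemma \ref{lem:Schatten1-sym}, the paper cites \cite[Proposition 3.7]{BIR1} directly, which amounts to the same thing), and the lower bound is obtained by testing the linear map $u(x)=Ax$, using that $\tfrac{dEu_k}{d|Eu_k|}$ is a unit symmetric rank-one matrix so that $N(Eu_k)=|Eu_k|=\Ssym{Eu_k}$ as measures, and concluding by weak-$*$ convergence of $Eu_k$ and Reshetnyak lower semicontinuity. Your explicit justification of the weak-$*$ convergence via the uniform mass bound is a welcome (if routine) elaboration of a step the paper states without detail.
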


\begin{proof}
We first observe that the proof of \cite[Proposition 3.7]{BIR1} (with the choice of parameters $\alpha=\kappa=1$, $\lambda_w=0$ and $\mu_w=1/2$) ensures that $N \leq \Ssym{\cdot}$, so that we are back to show the reverse inequality. 

Let $A \in \Ms$ and consider the linear map $u(x)=Ax$. By assumption, there is a sequence $\{u_k\}_{k \in \N}$ in $PR(\R^n)$ such that $u_k \to u$ strongly in $L^1(\O;\R^n)$ and $N(Eu_k)(\O) \to N(Eu)(\O)$. This implies in particular that $Eu_k \wto Eu$ weakly* in $\M(\O;\Ms)$. Since by Remark \ref{rem:pwcr-jump1}, $\frac{dEu_k}{d|Eu_k|}$ is a rank-one symmetric matrix, it results from \eqref{eq:hsym} that
\begin{eqnarray*}
N(Eu)(\O) & = & \lim_{k \to \infty} N(Eu_k)(\O) = \lim_{k \to \infty} |Eu_k|(\O)\\
& = & \lim_{k \to \infty} \Ssym{Eu_k}(\O) \geq \Ssym{Eu}(\O),
\end{eqnarray*}
where we used Reshetnyak's lower semicontinuity theorem in the last inequality (see \cite[Theorem 2.38]{AFP}). Recalling that $Eu=A$, we get that $N(A) \geq \Ssym{A}$.
\end{proof}

An immediate consequence of Theorem \ref{thm:BD} is the following relaxation result, in the spirit of \cite{BDFV}.
\begin{cor}\label{relBD}
Let $\Omega\subset\R^n$ be open, bounded, with Lipschitz boundary. For all $u \in L^1(\O;\R^n)$, define
$$\overline F(u):=\inf\left\{\liminf_{k \to \infty} |Eu_k|(\O) : \quad u_k \in PR(\R^n), \; u_k \to u \text{ in }L^1(\O;\R^n)\right\}.$$
Then
$$\overline F(u)=
\begin{cases}
\Ssym{Eu}(\Omega) & \text{ if }u \in BD(\O),\\
+\infty & \text{ otherwise}.
\end{cases}$$
\end{cor}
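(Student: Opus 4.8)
The plan is to derive Corollary \ref{relBD} from Theorem \ref{thm:BD} together with a lower semicontinuity argument, splitting into the two cases $u \in BD(\O)$ and $u \notin BD(\O)$.

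\textbf{Upper bound and the case $u\in BD(\O)$.} Suppose first $u \in BD(\O)$. Theorem \ref{thm:BD} provides a sequence $\{u_k\}_{k\in\N}$ in $PR(\R^n)$ with $u_k \to u$ in $L^1(\O;\R^n)$ and $\Ssym{Eu_k}(\O) \to \Ssym{Eu}(\O)$. By Remark \ref{rem:pwcr-jump1}, for each $k$ we have $\Ssym{Eu_k}=|Eu_k|$ as measures, hence $|Eu_k|(\O) = \Ssym{Eu_k}(\O) \to \Ssym{Eu}(\O)$. In particular this sequence is admissible in the definition of $\overline F(u)$, so $\overline F(u) \le \liminf_k |Eu_k|(\O) = \Ssym{Eu}(\O)$.

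\textbf{Lower bound.} Now let $\{u_k\}_{k\in\N}$ be any sequence in $PR(\R^n)$ with $u_k \to u$ in $L^1(\O;\R^n)$; we may assume $\liminf_k |Eu_k|(\O) < +\infty$ and, passing to a subsequence, that the liminf is a limit and is finite. Since $u_k \to u$ in $L^1$ and $\sup_k |Eu_k|(\O) < +\infty$, the $BD$ compactness and lower semicontinuity results (see \cite{ACDM,Temam}) give $u \in BD(\O)$, $Eu_k \wto Eu$ weakly* in $\M(\O;\Ms)$, and in particular $u \notin BD(\O)$ is impossible; this already forces $\overline F(u) = +\infty$ when $u \notin BD(\O)$. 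When $u \in BD(\O)$, using $|Eu_k| = \Ssym{Eu_k}$ (Remark \ref{rem:pwcr-jump1}) and Reshetnyak's lower semicontinuity theorem \cite[Theorem 2.38]{AFP} applied to the convex, positively one-homogeneous function $\Ssym{\cdot}$, we obtain
\[
\liminf_{k\to\infty} |Eu_k|(\O) = \liminf_{k\to\infty} \Ssym{Eu_k}(\O) \ge \Ssym{Eu}(\O).
\]
Taking the infimum over all admissible sequences yields $\overline F(u) \ge \Ssym{Eu}(\O)$, which combined with the upper bound gives equality in the case $u \in BD(\O)$, and the case $u \notin BD(\O)$ has been handled above (with $\overline F(u) = +\infty$, the infimum over an empty or cost-$+\infty$ family).

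\textbf{Main obstacle.} The genuinely new ingredient is entirely contained in Theorem \ref{thm:BD}; the corollary itself is a soft consequence. The only point requiring a little care is the lower bound: one must invoke the $BD$-analogue of the standard $BV$ closure/compactness theorem to guarantee that a sequence of piecewise rigid motions with bounded Frobenius deformation mass converging in $L^1$ has its $L^1$-limit in $BD$, and then apply Reshetnyak semicontinuity to the non-strictly-convex integrand $\Ssym{\cdot}$ rather than to $|\cdot|$ — it is precisely the identity $\Ssym{Eu_k} = |Eu_k|$ for piecewise rigid $u_k$ that lets one replace the Frobenius mass by the symmetric Schatten mass before passing to the limit, and this is what makes the relaxed functional $\Ssym{Eu}(\O)$ rather than $|Eu|(\O)$ appear.
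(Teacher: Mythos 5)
Your proof is correct and follows essentially the same route as the paper: the upper bound comes from Theorem \ref{thm:BD} combined with the identity $|Eu_k|=\Ssym{Eu_k}$ for piecewise rigid motions (Remark \ref{rem:pwcr-jump1}), and the lower bound from that same identity plus Reshetnyak lower semicontinuity. The only difference is that you spell out the compactness argument handling the case $u\notin BD(\O)$, which the paper leaves implicit; that addition is accurate and harmless.
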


The proof of the lower bound inequality is an immediate consequence of Reshetnyak lower semicontinuity Theorem (see \cite[Theorem 2.38]{AFP}) together with the fact that $|Eu|=\Ssym{Eu}$ as measures if $u \in PR(\R^n)$. The upper bound is a direct consequence of Theorem \ref{thm:BD}.

\begin{proof}[Proof of Theorem \ref{thm:BD}]
{\bf Step 1}. Let us show that there is no loss of generality to suppose that $u$ is a continuous, piecewise affine function compactly supported in $\R^n$.

Since $\O$ has Lipschitz boundary, it follows from \cite[Theorem 3.2]{B} that $u \in BD(\O)$ has a trace on $\partial\O$, denoted by $\gamma(u)$, which belongs to $L^1(\partial\O;\R^n)$. Gagliardo's Theorem then ensures the existence of $v \in W^{1,1}(\R^n \setminus \overline\O;\R^n)$, compactly supported in $\R^n$, such that $\gamma(v)=\gamma(u)$ on $\partial\O$. Setting $\tilde u:=u {\bf 1}_\O + v {\bf 1}_{\R^n \setminus \overline \O}$ yields $\tilde u \in BD(\R^n)$ with
$$E\tilde u= Eu \res \O + e(v)\LL^n \res (\R^n \setminus \overline \O).$$
Let $\{\eta_\e\}_{\e>0}$ be a standard family of mollifiers and set 
$$\tilde u_\e:=\tilde u*\eta_\e \in \C^\infty_c(\R^n;\R^n).$$
Standard properties of mollifiers imply that $\tilde u_\e \to \tilde u$ strongly in $L^1(\R^n;\R^n)$, hence by lower semicontinuity of the total variation
$$|E\tilde u|(\R^n) \leq \liminf_{\e \to 0}|E\tilde u_\e|(\R^n).$$
Moreover, by \cite[Theorem 2.2]{AFP}, we also have that $E\tilde u_\e=(E\tilde u)*\eta_\e$ and $|E\tilde u_\e|(\R^n) \leq |E\tilde u|(\R^n)$. As a consequence, $|E\tilde u_\e|(\R^n) \to |E\tilde u|(\R^n)$ and Reshetnyak continuity Theorem (see (\cite[Theorem 2.39]{AFP})) shows that $\Ssym{E\tilde u_\e}(\R^n) \to \Ssym{E\tilde u}(\R^n)$. In particular, since $|E\tilde u|(\partial\O)=0$, we also get that $|E\tilde u_\e|(\O) \to |E\tilde u|(\O)$, and another application of the Reshetnyak continuity Theorem yields $\Ssym{E\tilde u_\e}(\O) \to \Ssym{E\tilde u}(\O)$.

The previous discussion shows that there is no loss of generality to assume that $u \in \C^\infty_c(\R^n;\R^n)$. 

\medskip
We next fix a small parameter $\delta>0$ and consider a triangulation of $\R^n$ into $n$-simplexes $\{T_i\}_{i \in \N}$ with pairwise disjoint interiors and such that, for all $i \in \N$, $\diam(T_i) \leq \delta$ and $\LL^n(T_i) \geq c\delta^n$ for some $c>0$. Let $\hat u_\delta$ be the continuous piecewise affine function which is the Lagrange interpolation of the values of $u$ at the nodes of the triangulation. Standard finite element estimates show that $\hat u_\delta \to u$ strongly in $W^{1,1}(\R^n;\R^n)$, hence in particular we get that $\Ssym{E\hat u_\delta}(\O) \to \Ssym{Eu}(\O)$.

\bigskip

\noindent {\bf Step 2.}  Assume that $T$ is an $n$-simplex and $u(x)=Ax+b$ for some $A \in \mathbb M^{n \times n}$ and $b \in \R^n$. We now modify $u$ by adapting and extending the construction exhibited in \cite[Section 3.1]{BIR1}. We denote by
$$A^{\rm sym}=\frac{A+A^T}{2} \in \mathbb M^{n \times n}_{\rm sym}, \quad A^{\rm skew}=\frac{A-A^T}{2}\in \mathbb M^{n \times n}_{\rm skew}$$
so that $A=A^{\rm sym}+A^{\rm skew}$. The matrix $A^{\rm sym}$ being symmetric, we can consider its spectral decomposition 
$$A^{\rm sym}=\sum_{i=1}^n \lambda_i e_i \otimes e_i,$$
here $\lambda_1,\ldots,\lambda_n \in \R$ are the eigenvalues and $e_1,\ldots,e_n$ are the associated eigenvectors of $A^{\rm sym}$ such that $\{e_1,\ldots,e_n\}$ {forms} an orthonormal basis of $\R^n$.

We argue by induction on the dimension $n$ to show the following property: there exists a sequence $\{v_k^T\}_{k \in \N}$ of piecewise constant functions $v_k^T:\R^n\to\R^n$ such that
\begin{equation}\label{eq:induction}
\begin{cases}
\text{$v_k^T(x) \to A^{\rm sym}x$ uniformly with respect to $x \in \R^n$,}\\
Ev_k^T\ll \mathcal{H}^{n-1}\res L_k,\\
 \Ssym{Ev^T_k}(T) \to \Ssym{Eu}(T),
\end{cases}
\end{equation}
where $L_k$ is a countable union of $(n-1)$-dimensional affine subspaces of $\R^{n}$, finitely many thereof intersecting $T$.

Once \eqref{eq:induction} is established, we set $u^T_k(x)=v^T_k(x)+A^{\rm skew}x+b$ which defines a piecewise rigid body motion in $T$, and satisfies $u^T_k \to u$ uniformly in $T$ and $\Ssym{Eu^T_k}(T)=\Ssym{Ev^T_k}(T) \to \Ssym{Eu}(T)$.

\medskip

{\it Step 2a.} Let us first assume that $n=2$. 

\medskip

$\bullet$ If $\det(A^{\rm sym})>0$, then the eigenvalues $\lambda_1$ and $\lambda_2$ have the same sign. It is immediate to check that $\Ssym{A^{\rm sym}}=|\lambda_1+\lambda_2|=|\lambda_1|+|\lambda_2|$. We introduce an auxiliary step function $s_k : \R \to \R$ defined by 
\begin{equation}\label{sk}s_k(t):=\sum_{i \in \mathbb Z} \frac{i}{k} {\bf 1}_{[\frac{i}{k},\frac{i+1}{k})}(t),
\end{equation}
which satisfies that $s_k(t) \to t$ uniformly in $\R$.
In that case, we set
$$v^T_k(x):=\lambda_1 s_k(x\cdot e_1) e_1+ \lambda_2 s_k(x \cdot e_2)e_2$$
so that $v^T_k(x) \to \lambda_1 (x\cdot e_1)e_1+\lambda_2 (x\cdot e_2)e_2=A^{\rm sym}x$ uniformly with respect to $x \in \R^2$. Next
$$Ev^T_k  =  \sum_{\ell \in \mathbb Z} \frac{\lambda_1e_1 \otimes e_1}{k} \HH^1\res \left\{x \in \R^2 : \; e_1\cdot x=\frac{\ell}{k}\right\}  +  \sum_{\ell \in \mathbb Z} \frac{\lambda_2e_2 \otimes e_2}{k} \HH^1\res  \left\{x \in \R^2 : \; e_2\cdot x=\frac{\ell}{k}\right\} .$$
Since both measures in the right-hand-side of the previous equality are mutually singular (because $e_1$ and $e_2$ are orthogonal) and $\Ssym{\lambda_i e_i \otimes e_i}=|\lambda_i|$, we infer that
$$\Ssym{Ev^T_k}  =  \sum_{\ell \in \mathbb Z} \frac{|\lambda_1|}{k} \HH^1\res \left\{x \in \R^2 : \; e_1\cdot x=\frac{\ell}{k}\right\}+  \sum_{\ell \in \mathbb Z} \frac{|\lambda_2|}{k} \HH^1\res  \left\{x \in \R^2 : \; e_2\cdot x=\frac{\ell}{k}\right\},$$
and thus, for every polyhedral set $P \subset \R^2$,
\begin{eqnarray*}
\Ssym{Ev^T_k}(P) & = & \sum_{\ell \in \mathbb Z} \frac{|\lambda_1|}{k} \HH^1\left( \left\{x \in P : \; e_1\cdot x=\frac{\ell}{k}\right\}\right)+  \sum_{\ell \in \mathbb Z} \frac{|\lambda_2|}{k} \HH^1\left(  \left\{x \in P : \; e_2\cdot x=\frac{\ell}{k}\right\}\right)\\
& \to & (|\lambda_1|+|\lambda_2|)\LL^2(P)=\Ssym{A^{\rm sym}}\LL^2(P)=\Ssym{Eu}(P).
\end{eqnarray*}

\medskip

$\bullet$ If $\det(A^{\rm sym})\leq 0$, by \cite[Lemma 2.1]{BIR1}, we can write $A^{\rm sym}=\alpha \odot \beta$ for some vectors $\alpha$, $\beta \in \R^2$. Note that, if $\alpha$ and $\beta$ are linearly dependent, we can assume that $\alpha=\beta$. 
We define
$$v^T_k(x):=\frac{\alpha |\beta|}{2}s_k\left(\frac{\beta\cdot x}{|\beta|} \right) + \frac{\beta|\alpha|}{2} s_k\left(\frac{\alpha\cdot x}{|\alpha|}\right)$$
so that $v^T_k(x) \to \frac{\alpha}{2}(\beta\cdot x) + \frac{\beta}{2}(\alpha\cdot x)=A^{\rm sym}x$ uniformly with respect to $x \in \R^2$. Computing the symmetric gradient yields
$$Ev^T_k  =  \sum_{\ell \in \mathbb Z} \frac{\alpha \odot \beta}{2k} \HH^1\res \left\{x \in \R^2 : \; \frac{\beta}{|\beta|} \cdot x=\frac{\ell}{k}\right\}  +  \sum_{\ell \in \mathbb Z} \frac{\alpha \odot \beta}{2k} \HH^1\res \left\{x \in \R^2 : \; \frac{\alpha}{|\alpha|} \cdot x=\frac{\ell}{k}\right\}.$$
As a consequence, we get that for every polyhedral set $P \subset \R^2$,
\begin{multline*}
\Ssym{Ev^T_k}(P)  =  \sum_{\ell \in \mathbb Z} \frac{\Ssym{\alpha \odot \beta}}{2k} \HH^1\left( \left\{x \in P : \;  \frac{\beta}{|\beta|}  \cdot x=\frac{\ell}{k}\right\}\right)\\
 +  \sum_{\ell \in \mathbb Z} \frac{\Ssym{\alpha \odot \beta}}{2k} \HH^1\left( \left\{x \in P : \; \frac{\alpha}{|\alpha|} \cdot x=\frac{\ell}{k}\right\}\right),
  \end{multline*}
hence $\Ssym{Ev^T_k}(P) \to \Ssym{\alpha \odot \beta} \LL^2(P)=\Ssym{Eu}(P)$.

Taking $P=T$, we have thus proved the validity of \eqref{eq:induction} for $n=2$.
\medskip

{\it Step 2b.} Let us assume now that there exists $n \geq 3$ such that \eqref{eq:induction} holds for all $d \in \{2,\ldots,n-1\}$.

\medskip 

$\bullet$ If $\lambda_i\geq 0$ (or similarly $\lambda_i\leq 0$) for all $i=1,\dots,n$, then we construct $v^T_k$ by laminating componentwise, that is 
\[v^T_k(x):=\sum_{i=1}^n\lambda_is_k(x\cdot e_i)e_i.\]
We immediately check as in Step 2a that $v^T_k$ converges to $x \mapsto A^{\rm sym}x$ uniformly in $\R^n$ and that 
$$\lim_{k \to \infty} \Ssym{Ev^T_k}(T)\to\sum_{i=1}^n|\lambda_i|\mathcal{L}^n(T)=\Ssym{Eu}(T).$$

\medskip 

$\bullet$ If all eigenvalues but one have the same sign, that is, $\lambda_1\leq\dots\leq \lambda_{n-1}\leq0< \lambda_n$ and $\lambda_1<0$ (or similarly $\lambda_1\geq\dots\geq \lambda_{n-1}\geq0 > \lambda_n$ and $\lambda_1>0$), we decompose the matrix $A$ as the sum of  rank-one symmetric matrices. More precisely, we set
$$A=\sum_{j=1}^{n-1} A^{(j)},$$
where 
$$A^{(j)}= \lambda_j e_j \otimes e_j+ \delta_j \lambda_n e_n \otimes e_n, \qquad \delta_j:=\lambda_j\left(\sum_{l=1}^{n-1}\lambda_l\right)^{-1}.$$
Note that $A^{(j)}=\alpha^{(j)} \odot \beta^{(j)}$ where
$$\alpha^{(j)}=-\sqrt{|\lambda_j|} e_j + \sqrt{\delta_j \lambda_n} e_n, \qquad \beta^{(j)}=\sqrt{|\lambda_j|} e_j + \sqrt{\delta_j \lambda_n} e_n.$$
We next define
$$v^T_k(x):=\sum_{j=1}^{n-1} \left[\frac{\alpha^{(j)} |\beta^{(j)}|}{2}s_k\left(\frac{\beta^{(j)}\cdot x}{|\beta^{(j)}|} \right) + \frac{\beta^{(j)}|\alpha^{(j)}|}{2} s_k\left(\frac{\alpha^{(j)}\cdot x}{|\alpha^{(j)}|}\right)\right].$$
Arguing as in Step 2a, we get that $v^T_k$ converges to $x \mapsto A^{\rm sym}x$ uniformly in $\R^n$ and 
$$\lim_{k \to \infty}\Ssym{Ev^T_k}(T)=\sum_{j=1}^{n-1}\Ssym{A^{(j)}}\mathcal{L}^n(T).$$
Since $A^{(j)}$ is a rank-one symmetric matrix, then \eqref{eq:hsym} implies that
\begin{eqnarray*}
\sum_{j=1}^{n-1}\Ssym{A^{(j)}} & = & \sum_{j=1}^{n-1}|A^{(j)}|= \sum_{j=1}^{n-1} \sqrt{\lambda_j^2 +\delta_j^2\lambda_n^2}\\
& = & \sqrt{\left(\sum_{l=1}^{n-1}\lambda_l\right)^2 + \lambda_n^2} = \Ssym{A^{\rm sym}}
\end{eqnarray*}
by \eqref{posneg}, hence $\Ssym{Ev^T_k}(T)\to \Ssym{Eu}(T)$.

\medskip 

$\bullet$ It remains to consider the case where $\lambda_1\leq\dots\leq \lambda_r<0<\lambda_{m+1}\leq\dots\leq \lambda_n$ with $2\leq r\leq m\leq n-2$.
 
We decompose $A^{\rm sym}$ as sum of lower rank symmetric matrices, in such a way that $\Ssym{\cdot}$ stays additive on such a decomposition. Indeed, let us write
\[A^{\rm sym}=\sum_{j={m+1}}^nA^{(j)},\]
where, for all $j=m+1,\ldots,n$,
\[A^{(j)}:=\lambda_j e_j\otimes e_j+a_j\sum_{i=1}^m  \lambda_i e_i \otimes e_i, \qquad a_j:= \lambda_j\left(\sum_{l=m+1}^n\lambda_l\right)^{-1}.\]	
Notice that $0< a_j\leq 1$ and $\sum_{j=m+1}^na_j=1$; moreover, $a_j\lambda_1\leq\dots \leq a_j\lambda_r< 0<\lambda_j$ and, since $r+1\leq m+1<n$, then $A^{(j)}$ has at least one zero eigenvalue. 
	
Let $\{\hat e_1,\ldots,\hat e_{{r+1}}\}$ be the canonical basis of $\R^{{r+1}}$. For all $m+1 \leq j \leq n$, we denote by 
$$\hat A^{(j)}:=a_j \sum_{i=1}^{{r}}  \lambda_i \hat e_i \otimes \hat e_i + \lambda_j \hat e_{{r+1}} \otimes \hat e_{{r+1}} \in \mathbb M^{({r+1}) \times ({r+1})}_{\rm sym}$$
and 
$$\hat T:=\left\{\hat x \in \R^{{{r}}+1} : \; \sum_{i=1}^{{r}} \hat x_i e_i + \hat x_{{{r}}+1} e_j \in T\right\},$$
which is a polyhedral subset of $\R^{{{r}}+1}$. 	
Using again that $m+1 <n$,  the inductive step ensures the existence of a sequence $\{\hat v_k^{(j)}\}_{k \in \N}$, {$\hat v_k^{(j)}:\R^{{{r}}+1}\to\R^{{{r}}+1}$}, of piecewise constant functions such that $\hat v_k^{(j)}(\hat x) \to   \hat A^{(j)} \hat x$ uniformly with respect to $\hat x \in \R^{{{r}}+1}$, $$\Ssym{E\hat v_k^{(j)}}(\hat T)\to \Ssym{\hat A^{(j)}}(\hat T)$$ as $k\to\infty$ and $E\hat v_k^{(j)}\ll \mathcal{H}^{{{r}}}\res \hat L^{(j)}_k$, where 
$\hat L_k^{(j)}$ is a countable union of ${{r}}$-dimensional affine subspaces of $\R^{{{r}}+1}$, finitely many thereof intersecting $\hat T$.
 
Hence, setting 
$$v_k^{(j)}(x):=\sum_{l=1}^{{r}} \left[\hat v_k^{(j)} \left(\sum_{i=1}^{{r}} x_i \hat e_i + x_j \hat e_{{{r}}+1}\right) \cdot \hat e_l \right] e_l+ \left[\hat v_k^{(j)} \left(\sum_{i=1}^{{r}} x_i \hat e_i + x_j \hat e_{{{r}}+1}\right) \cdot \hat e_{{{r}}+1} \right] e_j$$
and 
$$v_k^T(x):=\sum_{j=m+1}^n v_k^{(j)}(x) ,$$
we get 
\begin{eqnarray*}
v_k^T(x) & \to &\sum_{j=m+1}^n \left\{ \sum_{l=1}^{{r}} \left[\hat A^{(j)}\left(\sum_{i=1}^{{r}} x_i \hat e_i + x_j \hat e_{{{r}}+1}\right)\cdot \hat e_l \right]e_l\right.\\
&&\hspace{5cm} \left. +\left[\hat A^{(j)}\left(\sum_{i=1}^{{r}} x_i \hat e_i + x_j \hat e_{{{r}}+1}\right)\cdot \hat e_{{{r}}+1} \right]e_j\right\}\\
 & = &  \sum_{j=m+1}^n\left\{ \sum_{l=1}^{{r}} \left[\sum_{i=1}^{{r}} \big(a_j \lambda_i x_i \hat e_i + \lambda_j x_j \hat e_{{{r}}+1}\big) \cdot \hat e_l \right] e_l\right.\\
&&\hspace{5cm} \left. +\left[\sum_{i=1}^{{r}} \big(a_j \lambda_i x_i \hat e_i + \lambda_j x_j \hat e_{{{r}}+1}\big) \cdot \hat e_{{{r}}+1} \right] e_j\right\}\\
  & = &  \sum_{j=m+1}^n\left( \sum_{i=1}^{{r}} a_j \lambda_i x_i e_i + \lambda_j x_j e_j\right)\\
 & = &   \sum_{i=1}^{\fla{r}} \lambda_i x_i e_i + \sum_{j=m+1}^n \lambda_j x_j e_j=A^{\rm sym}x
 \end{eqnarray*}
 uniformly with respect to $x \in \R^n$. Note also that the measure  $E v_k^{(j)}$ is concentrated on a countable union of $(n-1)$-dimensional affine subspaces of $\R^n$ of the form
 $$ L_k^{(j)}=\{x \in \R^n : \; (x_1,\ldots,x_r,x_j) \in \hat L_k^{(j)}\}.$$ 
 Since we have $\HH^{n-1}(L_k^{(j)} \cap L_k^{(l)})=0$ when $l\neq j$, we get that the measures $E v_k^{(l)}$ and $E v_k^{(j)}$ are concentrated on essentially disjoint sets and therefore,
\[\Ssym{Ev_k^T}(T)= \sum_{j=m+1}^n\Ssym{Ev^{(j)}_k}(T)\to\sum_{j=m+1}^n\Ssym{A^{(j)}}(T).\]
Since, 
\begin{eqnarray*}
\sum_{j=m+1}^n \Ssym{A^{(j)}}&=&\sum_{j=m+1}^n\sqrt{\left(\sum_{i=1}^{{r}} a_j\lambda_i\right)^2+\lambda_j^2}\\
&=&\sqrt{\left(\sum_{i=1}^{{r}}\lambda_i\right)^2+\left(\sum_{l=m+1}^n\lambda_l\right)^2}=\Ssym{A^{\rm sym}},
\end{eqnarray*}
we deduce that $\Ssym{Ev_k^T}(T) \to \Ssym{Eu}(T)$.

\medskip

\noindent {\bf Step 3.} Assume that $u$ is a continuous, piecewise affine and compactly supported function on a partition $\{T_i\}_{i \in \N}$ of $\R^n$ made of $n$-simplexes. Applying the construction of Step 2 in each simplex $T_i$, we set
$$u_k:=\sum_{i \in \N} u_k^{T_i} {\bf 1}_{T_i},$$
which defines a sequence $\{u_k\}_{k \in \N}$ in $PR(\R^n)$ such that $u_k \to u$ uniformly in $\R^n$ and $\Ssym{Eu_k}(T_i) \to \Ssym{Eu}(T_i)$ for all $i \in \N$. Note that, since $u$ has compact support in $\R^n$, we get that 
\begin{equation}\label{eq:diese1}
\#\{i \in \N : \; T_i \cap {\rm Supp}(u)\neq \emptyset\}<\infty, \quad \sup_{k \in \N}\#\{i \in \N : \; T_i \cap {\rm Supp}(u_k)\neq \emptyset\}<\infty.
\end{equation}

It remains to estimate the measure $\Ssym{Eu_k}$ on the common interface $S_{ij}=\partial T_i \cap \partial T_j$ of two adjacent simplexes $T_i$ and $T_j$. To this aim, we observe that, by construction, for all $i \in \N$,
$$\|u_k-u\|_{L^\infty(T_i;\R^n)} \leq C\frac{|A_i^{\rm sym}|}{k},$$
and thus, owing to \eqref{eq:diese1},
$$\|u_k-u\|_{L^\infty(\O;\R^n)} \leq \frac{C}{k}$$
for some constant $C>0$ independent of $k$. As a consequence of the continuity of $u$, we deduce that for all $x \in S_{ij}$
$$|u_k^+(x)- u_k^-(x)| \leq \frac{C}{k},$$
where $u_k^-=u^{T_i}_k|_{S_{ij}}$ and $u_k^+=u^{T_j}_k|_{S_{ij}}$ denote the one-sided traces of $u_k$ on both sides of $S_{ij}$. On the other hand, the jump formula yields
$$Eu_k \res S_{ij}= (u_k^+-u_k^-)\odot \nu \HH^{n-1}\res S_{ij},$$
where $\nu$ is the normal vector to $S_{ij}$ oriented from $T_i$ to $T_j$. As a consequence,
\begin{multline*}
\Ssym{Eu_k}(S_{ij})= \int_{S_{ij}} \Ssym{(u_k^+-u_k^-)\odot \nu}\, d \HH^{n-1} \\
= \int_{S_{ij}} |(u_k^+-u_k^-)\odot \nu| \, d\HH^{n-1} \leq  \frac{C}{k}\HH^{n-1}(S_{ij}) \to 0
\end{multline*}
as $k \to \infty$. Using again \eqref{eq:diese1}, we get that
$$\lim_{k\to\infty}\Ssym{Eu_k}(\R^n) =\lim_{k\to\infty}\sum_{i\in \N} \Ssym{Eu_k}(T_i) =\sum_{i\in \N} \Ssym{Eu}(T_i)=\Ssym{Eu}(\R^n).$$
Finally, since $\Ssym{Eu}(\partial\O)= |Eu|(\partial\O)=0$, we deduce that $\Ssym{Eu_k}(\O) \to \Ssym{Eu}(\O)$.
\end{proof}

\section{Schatten-mass approximation of vector valued functions of bounded variation}

\begin{defn}
A function $u:\R^n \to \R^m$ is {\it piecewise constant} if there exists a countable family $\{P_i\}_{i \in \N}$ of polyhedral sets with pairwise disjoint interiors such that  $\R^n= \bigcup_{i\in \N} P_i$ and vectors $\{c_i\}_{i \in \N}$ in $\R^m$ such that for all $i \in \N$,
$$u(x)=c_i \quad \text{ for all }x \in P_i.$$
We denote by $PC(\R^n;\R^m)$ the space of all such piecewise constant functions.
\end{defn}

\begin{rem}\label{rem:pwcr-jump}
Note that if $u:\R^n \to \R^m$ is piecewise constant, then its gradient, $Du$ is a pure jump measure concentrated on $\bigcup_{i,j \in \N} (\partial P_i \cap \partial P_j)$. Moreover,
$$Du\res (\partial P_i \cap \partial P_j)= \left((u|_{P_j})^+- (u|_{P_i})^-\right) \otimes \nu \HH^{n-1} \res (\partial P_i \cap \partial P_j),$$
where $\nu$ is the unit vector orthogonal to $\partial P_i \cap \partial P_j$ oriented from $P_i$ to $P_j$, and $(u|_{P_j})^+$ (resp. $ (u|_{P_i})^-$) is the trace of $u|_{P_j}$ (resp. $ u|_{P_i}$) on $\partial P_i \cap \partial P_j$. In particular, the variation measure of $Du$ with respect to the Frobenius and Schatten norms coincide, i.e. $|Du|=\Svar{Du}$ as measures in $\R^n$.
\end{rem}

Our second main result is the following approximation of bounded variation vector fields.

\begin{thm}\label{thm:BV}
Let $\O$ be a bounded open set of $\R^n$ with Lipschitz boundary and $u \in BV(\O;\R^m)$. Then, there exists a sequence $\{u_k\}_{k \in \N}$ in $PC(\R^n;\R^m)$ such that
$$
\begin{cases}
u_k \to u \quad \text{strongly in }L^1(\O;\R^m),\\
\Svar{Du_k}(\O) \to \Svar{Du}(\O).
\end{cases}
$$
\end{thm}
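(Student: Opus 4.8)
The plan is to mirror closely the proof of Theorem~\ref{thm:BD}, replacing symmetric tensor products $a \odot b$ by full tensor products $a \otimes b$, eigenvalues by singular values, and the symmetric Schatten norm $\Ssym{\cdot}$ by the Schatten norm $\Svar{\cdot}$; the role played by \eqref{eq:hsym} is now played by \eqref{eq:S1=}, and the role of Lemma~\ref{lem:Schatten1-sym} by Lemma~\ref{lem:Schatten1}. First I would carry out the reduction of Step~1: using the trace theorem and Gagliardo's extension in $BV$ together with \cite[Theorem~3.2]{B}-type arguments (here the standard $BV$ trace theory of \cite{AFP} suffices), extend $u$ to a compactly supported $\tilde u \in BV(\R^n;\R^m)$ with $|D\tilde u|(\partial\O)=0$, mollify to get $\tilde u_\e \in \C^\infty_c(\R^n;\R^m)$ with $|D\tilde u_\e|(\R^n) \to |D\tilde u|(\R^n)$, invoke Reshetnyak's continuity theorem \cite[Theorem~2.39]{AFP} applied to the continuous one-homogeneous function $\Svar{\cdot}$ to deduce $\Svar{D\tilde u_\e}(\O) \to \Svar{D\tilde u}(\O)$, and finally triangulate $\R^n$ into $n$-simplexes of diameter $\le \delta$ and replace $\tilde u_\e$ by its Lagrange interpolant $\hat u_\delta \to u$ in $W^{1,1}$, which again forces convergence of the $\Svar{\cdot}$-masses. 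Thus it suffices to treat a single $n$-simplex $T$ on which $u(x) = Ax + b$ with $A \in \Mmn$ constant.

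The core is the analogue of Step~2: on $T$ one must produce piecewise constant $v^T_k : \R^n \to \R^m$ with $v^T_k(x) \to Ax$ uniformly, $Dv^T_k \ll \HH^{n-1}\res L_k$ for a locally finite union $L_k$ of hyperplanes, and $\Svar{Dv^T_k}(T) \to \Svar{Du}(T) = \Svar{A}\LL^n(T)$. Here there is no ``skew part'' to peel off, which actually simplifies matters. Use the singular value decomposition $A = \sum_{i=1}^n s_i\, f_i \otimes e_i$ with $\{e_i\}$ orthonormal in $\R^n$, $\{f_i\}$ orthonormal in $\R^m$ (completing/truncating as needed), $s_i \ge 0$. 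Since all singular values are nonnegative there is no sign obstruction: one can simply laminate componentwise, setting
$$v^T_k(x) := \sum_{i=1}^n s_i\, s_k(x \cdot e_i)\, f_i,$$
where $s_k$ is the staircase function \eqref{sk}. Then $v^T_k(x) \to \sum_i s_i (x\cdot e_i) f_i = Ax$ uniformly, and
$$Dv^T_k = \sum_{i=1}^n \sum_{\ell \in \mathbb Z} \frac{s_i\, f_i \otimes e_i}{k}\, \HH^{n-1}\res\Big\{x : e_i \cdot x = \tfrac{\ell}{k}\Big\}.$$
The hyperplane families for distinct $i$ have pairwise $\HH^{n-1}$-null intersections because the $e_i$ are orthonormal, hence the corresponding measures are mutually singular; combining this with $\Svar{s_i f_i \otimes e_i} = s_i$ from \eqref{eq:S1=} gives $\Svar{Dv^T_k}(P) \to (\sum_i s_i)\LL^n(P) = \Svar{A}\LL^n(P)$ for every polyhedral $P$, in particular for $P = T$. (In effect the full-tensor case collapses to the ``all eigenvalues of one sign'' subcase of Step~2b, so no induction on dimension is even needed — this is the main simplification over the $BD$ proof.) Finally set $u^T_k(x) := v^T_k(x) + b$, still piecewise constant on $T$, with the same convergences.

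The last part is Step~3, verbatim as in the $BD$ proof: glue $u_k := \sum_{i \in \N} u^{T_i}_k \mathbf{1}_{T_i}$, which lies in $PC(\R^n;\R^m)$; since $u$ has compact support only finitely many simplexes meet $\operatorname{Supp}(u)$ and, uniformly in $k$, only finitely many meet $\operatorname{Supp}(u_k)$. On each interface $S_{ij} = \partial T_i \cap \partial T_j$ the jump formula gives $Du_k \res S_{ij} = (u_k^+ - u_k^-) \otimes \nu\, \HH^{n-1}\res S_{ij}$, and from $\|u_k - u\|_{L^\infty} \le C/k$ (a consequence of the uniform convergence in each simplex plus continuity of $u$) one gets $|u_k^+ - u_k^-| \le C/k$ on $S_{ij}$, whence $\Svar{Du_k}(S_{ij}) = \int_{S_{ij}} |(u_k^+ - u_k^-)\otimes\nu|\,d\HH^{n-1} \le (C/k)\HH^{n-1}(S_{ij}) \to 0$, using \eqref{eq:S1=} to pass between $\Svar{\cdot}$ and $|\cdot|$ on rank-one matrices. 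Summing over the finitely many relevant simplexes and interfaces yields $\Svar{Du_k}(\R^n) \to \Svar{Du}(\R^n)$, and since $\Svar{Du}(\partial\O) = |Du|(\partial\O) = 0$ we conclude $\Svar{Du_k}(\O) \to \Svar{Du}(\O)$, together with $u_k \to u$ uniformly hence strongly in $L^1(\O;\R^m)$. I do not expect a genuine obstacle here: the only point requiring a little care is bookkeeping the mutual singularity of the hyperplane families and ensuring the $L^\infty$ bounds are uniform over the finitely many active simplexes, exactly as in Theorem~\ref{thm:BD}.
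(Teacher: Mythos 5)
Your proposal is correct and follows essentially the same route as the paper: the same Step~1 reduction, the same componentwise lamination along the singular directions with the staircase function $s_k$, and the same interface estimate in the gluing step. Writing $A=\sum_i s_i\, f_i\otimes e_i$ via the SVD is only a cosmetic difference from the paper's polar decomposition $A=RU$, since there $f_i=Re_i$ and $s_i=\lambda_i$, so the constructed $u_k^T$ coincide.
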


The previous approximation result holds for the so-called Schatten-mass $\Svar{Du}$ of $Du$, and not for the usual total variation mass $|Du|$ as the following counterexample shows (see \cite[Remark 22]{AABU} and \cite[Proposition 4]{KristensenRindler}).

\begin{example}\label{ex}{\rm
Let $n=m=2$, $\O=Q=(0,1)^2$ is the unit cube and $u(x)=x$, so that $Du={\rm Id}$. Assume that there exists a sequence of piecewise constant functions $\{u_k\}_{k \in \N}$ such that $u_k \to u$ strongly in $L^1(Q;\R^2)$ and $|Du_k|(Q) \to |Du|(Q)$. Since $\Svar{\cdot}$ is a continuous and positively one-homogeneous function, Reshetnyak continuity Theorem (\cite[Theorem 2.39]{AFP}) ensures that $\Svar{Du_k}(Q) \to \Svar{Du}(Q)$. But since $u_k$ is piecewise constant, Remark \ref{rem:pwcr-jump} shows that
$$\frac{dDu_k}{d|Du_k|}=\alpha_k \otimes \beta_k$$
for some Borel functions $\alpha_k$, $\beta_k:\R^2 \to \R^2$, so that
$$\Svar{Du_k}=\Svar{\left(\frac{dDu_k}{d|Du_k|}\right)}|Du_k|=\Svar{\alpha_k \otimes \beta_k} |Du_k|=|\alpha_k \otimes \beta_k| |Du_k|= |Du_k|.$$
Consequently, $\Svar{Du}(Q)=|Du|(Q)$ which is not possible since $|Du|(Q)=|{\rm Id}|=\sqrt{2}$ while $\Svar{Du}(Q)=\Svar{{\rm Id}}=2$. 
}
\end{example}

Once more, the previous example can be extended to show that it is not possible to approximate $BV$ vector fields by piecewise constant functions in mass with respect to a strictly convex variation. Moreover, our Theorem \ref{thm:BV} is optimal in the sense that the Schatten-1 norm is the only possible norm coinciding with the Frobenius norm on rank-one  matrices for which such a density result holds.

\begin{prop}\label{prop:uniqueBV}
Let $N$ be a norm over $\mathbb M^{m \times n}$ such that $N(a \otimes b)=|a \otimes b|$ for all $a\in \R^m$ and $b \in \R^n$. Assume that for all bounded open set $\O \subset \R^n$ with Lipschitz boundary and all $u \in BV(\O;\R^m)$, there exists a sequence $\{u_k\}_{k \in \N}$ in $PC(\R^n;\R^m)$ such that
$$
\begin{cases}
u_k \to u \quad \text{strongly in }L^1(\O;\R^m),\\
N(Du_k)(\O) \to N(Du)(\O).
\end{cases}
$$
Then $N=\Svar{\cdot}$.
\end{prop}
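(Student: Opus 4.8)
The plan is to mimic the proof of Proposition \ref{unique}, which treats the $BD$ case, and adapt it to the $BV$ setting. First I would establish the inequality $N \leq \Svar{\cdot}$: this is immediate from Lemma \ref{lem:Schatten1}, since $N$ is convex (being a norm) and satisfies $N(a\otimes b)=|a\otimes b|$ for all rank-one matrices, so $N$ is one of the competitors in the supremum defining $\Svar{\cdot}$ through the convexification formula, whence $N(A)\leq\Svar{A}$ for all $A\in\Mmn$. Thus the whole content is the reverse inequality $N(A)\geq\Svar{A}$.

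To get the reverse inequality, fix $A\in\Mmn$ and consider the linear map $u(x)=Ax$ on a bounded Lipschitz open set $\O$, so that $Du\equiv A$ as a measure (more precisely $Du=A\,\LL^n\res\O$, but its Radon--Nikod\'ym density is the constant matrix $A$, and by homogeneity and boundedness of $\O$ the analysis below localizes correctly). By hypothesis there is a sequence $\{u_k\}$ in $PC(\R^n;\R^m)$ with $u_k\to u$ strongly in $L^1(\O;\R^m)$ and $N(Du_k)(\O)\to N(Du)(\O)$. Since convergence of masses with respect to the norm $N$ controls the total variation (all norms on a finite-dimensional space being equivalent, the $N$-variations $N(Du_k)(\O)$ are bounded, hence so are the Frobenius variations $|Du_k|(\O)$), a standard compactness argument gives $Du_k\wto Du$ weakly* in $\M(\O;\Mmn)$. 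Now Remark \ref{rem:pwcr-jump} is the crucial structural input: because each $u_k$ is piecewise constant, its derivative $Du_k$ is a pure jump measure whose polar $\frac{dDu_k}{d|Du_k|}$ is, $|Du_k|$-a.e., a rank-one matrix $\alpha_k\otimes\beta_k$ with $|\alpha_k\otimes\beta_k|=1$. By \eqref{eq:S1=} we then have $N(\alpha_k\otimes\beta_k)=|\alpha_k\otimes\beta_k|=\Svar{\alpha_k\otimes\beta_k}=1$ pointwise, so the three measures $N(Du_k)$, $|Du_k|$ and $\Svar{Du_k}$ all coincide on $\O$.

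Putting this together,
\begin{eqnarray*}
N(Du)(\O) & = & \lim_{k\to\infty} N(Du_k)(\O)=\lim_{k\to\infty}|Du_k|(\O)\\
& = & \lim_{k\to\infty}\Svar{Du_k}(\O)\geq \Svar{Du}(\O),
\end{eqnarray*}
where the final inequality is Reshetnyak's lower semicontinuity theorem (\cite[Theorem 2.38]{AFP}), applicable because $\Svar{\cdot}$ is convex, positively one-homogeneous and lower semicontinuous on $\Mmn$, and $Du_k\wto Du$ weakly*. Recalling that $Du=A$, we conclude $N(A)\geq\Svar{A}$ for all $A\in\Mmn$, which combined with $N\leq\Svar{\cdot}$ gives $N=\Svar{\cdot}$.

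The only step requiring a little care — and the place where one must not be cavalier — is the passage from convergence of $N$-masses to weak* convergence of $\{Du_k\}$ in $\M(\O;\Mmn)$: one should note that boundedness of $N(Du_k)(\O)$ yields boundedness of $|Du_k|(\O)$ (equivalence of norms on $\Mmn$), hence $\{Du_k\}$ is relatively weakly* compact in $\M(\O;\Mmn)$, and since $u_k\to u$ in $L^1$ forces $Du_k\to Du$ in $\mathcal D'(\O;\Mmn)$, the full sequence converges weakly* to $Du$. Everything else is a direct transcription of the argument already used for Proposition \ref{unique}, with $\otimes$ in place of $\odot$, Lemma \ref{lem:Schatten1} in place of \cite[Proposition 3.7]{BIR1}, and Remark \ref{rem:pwcr-jump} in place of Remark \ref{rem:pwcr-jump1}. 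I expect no genuine obstacle; the result is essentially formal once the convexification formula of Lemma \ref{lem:Schatten1} and the rank-one structure of the polar of $Du_k$ are in hand.
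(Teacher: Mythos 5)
Your proof is correct, and for the substantive inequality $N\geq\Svar{\cdot}$ it is exactly the paper's argument: use the assumed approximation property for $N$, observe via Remark \ref{rem:pwcr-jump} and \eqref{eq:S1=} that $N(Du_k)=|Du_k|=\Svar{Du_k}$ as measures because the polar of $Du_k$ is rank one, and conclude by Reshetnyak lower semicontinuity applied to $\Svar{\cdot}$ (your justification of the weak* convergence $Du_k\wto Du$ is in fact more explicit than the paper's). The only divergence is in the easy inequality $N\leq\Svar{\cdot}$: you obtain it from the convexification formula of Lemma \ref{lem:Schatten1}, i.e.\ purely finite-dimensional convex analysis, which is the route the paper takes for the $BD$ analogue (Proposition \ref{unique}, via \cite[Proposition 3.7]{BIR1}); the paper's proof of Proposition \ref{prop:uniqueBV} instead runs the same Reshetnyak argument a second time with the roles of $N$ and $\Svar{\cdot}$ exchanged, invoking Theorem \ref{thm:BV} to produce an approximating sequence whose $\Svar{\cdot}$-masses converge and then lower-semicontinuity for $N$. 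Your variant is marginally more self-contained in that direction (it does not rely on Theorem \ref{thm:BV}), while the paper's is pleasingly symmetric; both are valid, and a minor point of attribution aside ($N(\alpha_k\otimes\beta_k)=|\alpha_k\otimes\beta_k|$ is the hypothesis on $N$, not \eqref{eq:S1=}, which handles $\Svar{\cdot}$), there is nothing to fix.
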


\begin{proof}
Let $A \in \mathbb M^{m \times n}$ be a fixed matrix and consider the linear function $u(x)=Ax$. By Theorem \ref{thm:BV}, there exists a sequence  $\{u_k\}_{k \in \N}$ in $PC(\R^n;\R^m)$ such that $u_k \to u$ strongly in $L^1(\O;\R^m)$ and $\Svar{Du_k}(\O) \to \Svar{Du}(\O)$. Since $Du_k \wto Du$ weakly* in $\M(\O;\mathbb M^{m \times n})$, Reshetnyak's lower semicontinuity theorem (see \cite[Theorem 2.38]{AFP}) shows that
$$N(Du)(\O) \leq \liminf_{k \to \infty}N(Du_k)(\O).$$
But since, by \eqref{eq:S1=}, $N$, $|\cdot|$ and $\Svar{\cdot}$ coincide on rank-one matrices and, by Remark \ref{rem:pwcr-jump}, $\frac{dDu_k}{d|Du_k|}$ has rank one, we deduce that
$$N(Du) \leq \liminf_{k \to \infty} \Svar{Du_k}(\O)=\Svar{Du}(\O).$$
Recalling that $Du=A$, we get that $N(A) \leq \Svar{A}$. Inverting the role of $\Svar{\cdot}$ and $N$ (for which the approximation property holds by assumption), we infer that $\Svar{A} \leq N(A)$.
\end{proof}

As an immediate consequence of Theorem \ref{thm:BV}, we get the following relaxation result which is the counterpart of Corollary \ref{relBD} in the $BV$ setting.
\begin{cor}
Let $\Omega\subset\R^n$ be open, bounded, with Lipschitz boundary. For all $u \in L^1(\O;\R^m)$, define
$$\overline G(u):=\inf\left\{\liminf_{k \to \infty} |Du_k|(\O) : \ u_k \in PC(\R^n;\R^m), \; u_k \to u \text{ in }L^1(\O;\R^m)\right\}.$$
Then
$$\overline G(u)=
\begin{cases}
\Svar{Du}(\Omega) & \text{ if }u \in BV(\O;\R^m),\\
+\infty & \text{ otherwise}.
\end{cases}$$
\end{cor}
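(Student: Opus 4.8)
The plan is to mirror the structure of the proof of Corollary~\ref{relBD}, since the corollary at hand is precisely the $BV$ analogue. Let me denote by $\overline{G}(u)$ the relaxed functional defined in the statement. The argument splits into the usual two inequalities: lower bound (relaxation is bounded below by the candidate) and upper bound (the candidate is attained).

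\textbf{Lower bound.} Suppose first $u\in L^1(\O;\R^m)$ and let $\{u_k\}\subset PC(\R^n;\R^m)$ with $u_k\to u$ in $L^1(\O;\R^m)$ and $\liminf_k |Du_k|(\O)<+\infty$. Passing to a subsequence realizing the liminf, the uniform bound on $|Du_k|(\O)$ together with $L^1$ convergence forces $u\in BV(\O;\R^m)$ and $Du_k\wto Du$ weakly* in $\M(\O;\mathbb M^{m\times n})$. Now, by Remark~\ref{rem:pwcr-jump}, each $Du_k$ is carried by a rectifiable jump set with Radon--Nikod\'ym density of rank one, so $|Du_k|=\Svar{Du_k}$ as measures on $\R^n$, and in particular $|Du_k|(\O)=\Svar{Du_k}(\O)$. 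Since $\Svar{\cdot}$ is a norm on $\mathbb M^{m\times n}$ (hence convex, positively one-homogeneous and continuous), Reshetnyak's lower semicontinuity theorem (\cite[Theorem 2.38]{AFP}) applied to the weakly* convergent sequence $Du_k\wto Du$ gives
$$\Svar{Du}(\O)\le \liminf_{k\to\infty}\Svar{Du_k}(\O)=\liminf_{k\to\infty}|Du_k|(\O).$$
Taking the infimum over all admissible sequences yields $\Svar{Du}(\O)\le \overline G(u)$ when $u\in BV(\O;\R^m)$, and $\overline G(u)=+\infty$ when $u\notin BV(\O;\R^m)$ (the finiteness of the liminf already forced $u\in BV$).

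\textbf{Upper bound.} Conversely, let $u\in BV(\O;\R^m)$. Theorem~\ref{thm:BV} furnishes a sequence $\{u_k\}\subset PC(\R^n;\R^m)$ with $u_k\to u$ strongly in $L^1(\O;\R^m)$ and $\Svar{Du_k}(\O)\to\Svar{Du}(\O)$. Using once more Remark~\ref{rem:pwcr-jump}, $|Du_k|(\O)=\Svar{Du_k}(\O)$ for every $k$, hence $|Du_k|(\O)\to\Svar{Du}(\O)$. This sequence is admissible in the definition of $\overline G(u)$, so $\overline G(u)\le \liminf_k |Du_k|(\O)=\Svar{Du}(\O)$. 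Combining the two bounds gives $\overline G(u)=\Svar{Du}(\O)$ for $u\in BV(\O;\R^m)$ and $\overline G(u)=+\infty$ otherwise.

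There is essentially no obstacle here beyond bookkeeping: the entire content has been front-loaded into Theorem~\ref{thm:BV} (for the upper bound) and into Remark~\ref{rem:pwcr-jump} together with Reshetnyak lower semicontinuity (for the lower bound). The one point requiring a word of care is the implication ``$\liminf_k|Du_k|(\O)<+\infty$ and $u_k\to u$ in $L^1$ $\Rightarrow$ $u\in BV(\O;\R^m)$,'' which is standard lower semicontinuity of the total variation under $L^1$ convergence, and the extraction of a subsequence along which the liminf is a genuine limit so that Reshetnyak applies to the whole extracted sequence.
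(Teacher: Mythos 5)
Your proof is correct and follows exactly the route the paper intends: the lower bound via the identity $|Du_k|=\Svar{Du_k}$ for piecewise constant functions (Remark \ref{rem:pwcr-jump}) combined with Reshetnyak lower semicontinuity, and the upper bound as a direct consequence of Theorem \ref{thm:BV}, mirroring the argument sketched for Corollary \ref{relBD}. No gaps; the bookkeeping points you flag (finiteness of the liminf forcing $u\in BV$, and extracting a subsequence realizing the liminf so that weak* convergence and Reshetnyak apply) are handled appropriately.
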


Next we turn to the proof of Theorem \ref{thm:BV}.
\begin{proof}[Proof of Theorem \ref{thm:BV}]
Arguing as in Step 1 of the proof of Theorem \ref{thm:BD}, 
there is no loss of generality to suppose that $u$ is continuous and compactly supported in $\R^n$, and affine on each $n$-simplex $T_i$ of a triangulation of $\R^n$. We now modify $u$ on each of these $n$-simplexes.

\medskip

Let $T$ be a $n$-simplex and $u(x)=Ax+b$ where $A \in \Mmn$ and $b \in \R^m$. We consider the polar decomposition $A=RU$ where $U=\sqrt{A^TA} \in \Ms$ and $R \in O(m,n)$ is an orthogonal matrix. 

Let $\lambda_1,\ldots,\lambda_n \geq 0$ be the eigenvalues of $U$, i.e. the singular values of $A$, and let $e_1,\ldots,e_n$ be the associated eigenvectors which form an orthonormal basis of $\R^n$. We define the piecewise constant function
$$u^T_k(x):=\sum_{i=1}^n \lambda_i s_k(x\cdot e_i) R e_i +b,$$
where $s_k$ has been defined in \eqref{sk},
so that, using the spectral decomposition $U=\sum_{i=1}^n \lambda_i e_i \otimes e_i$,
$$u^T_k(x) \to \sum_{i=1}^n \lambda_i (x\cdot e_i) R e_i +b=Ax+b\quad \text{ uniformly with respect to }x \in \R^n.$$
Next
$$Du^T_k\res T  =  \sum_{\ell \in \mathbb Z}  \sum_{i=1}^n \frac{\lambda_i R(e_i \otimes e_i)}{k} \HH^{n-1}\res \left\{x \in T : \; x\cdot e_i=\frac{\ell}{k}\right\}.$$
Since, for fixed $k \in \N$, the measures 
$$\HH^{n-1}\res \left\{x \in T : \; x\cdot e_i=\frac{\ell}{k}\right\}, \quad i \in \N, \, \ell \in \mathbb Z,$$
are mutually singular (because $\{e_1,\ldots,e_n\}$ forms an orthonormal basis of $\R^n$) and $\Svar{\lambda_i R(e_i\otimes e_i)}=|\lambda_i| \Svar{R(e_i\otimes e_i)}=|\lambda_i| |e_i\otimes e_i|=\lambda_i$ (because $R$ is an orthogonal matrix and the singular values of $A$ are nonnegative), we infer that
\begin{eqnarray*}
\Svar{Du^T_k}(T) & = & \sum_{\ell \in \mathbb Z}  \sum_{i=1}^n \frac{\lambda_i}{k} \HH^{n-1}\left(\left\{x \in T : \; x\cdot e_i=\frac{\ell}{k}\right\}\right)\\
& \to &\left(\sum_{i=1}^n \lambda_i\right) \LL^n(T)=\Svar{A} \LL^n(T)=\Svar{Du}(T).
\end{eqnarray*}

Applying this construction in each $n$-simplex $T_i$, we set
$$u_k:=\sum_{i \in \N} u_k^{T_i} {\bf 1}_{T_i},$$
which defines a sequence $\{u_k\}_{k \in \N}$ of piecewise constant functions in $\R^n$ such that $u_k \to u$ uniformly in $\R^n$ and $\Svar{Du_k}(T_i)\to \Svar{Du}(T_i)$ for all $i \in \N$. Note that since, $u$ has compact support in $\R^n$, we get that 
\begin{equation}\label{eq:diese}
\#\{i \in \N : \; T_i \cap {\rm Supp}(u)\neq \emptyset\}<\infty, \quad \sup_{k \in \N}\#\{i \in \N : \; T_i \cap {\rm Supp}(u_k)\neq \emptyset\}<\infty.
\end{equation}

It remains to estimate the measure $\Svar{Du_k}$ on the common interface $S_{ij}=\partial T_i \cap \partial T_j$ of two adjacent $n$-simplexes $T_i$ and $T_j$. To this aim, we observe that, by construction, for all $i \in \N$,
$$\|u_k-u\|_{L^\infty(T_i;\R^m)} \leq \frac{|A_i|}{k}$$
and thus, owing to \eqref{eq:diese},
$$\|u_k-u\|_{L^\infty(T_i;\R^m)} \leq \frac{C}{k}$$
for some constant $C>0$ independent of $k$. As a consequence of the continuity of $u$, we deduce that for all $x \in S_{ij}$
$$|u_k^+(x)- u_k^-(x)| \leq \frac{2C}{k},$$
where $u_k^-=u^{T_i}_k|_{S_{ij}}$ and $u_k^+=u^{T_j}_k|_{S_{ij}}$ denote the one-sided traces of $u_k$ on both sides of $S_{ij}$. On the other hand, the jump formula yields
$$Du_k \res S_{ij}= (u_k^+-u_k^-)\otimes \nu \HH^{n-1}\res S_{ij},$$
where $\nu$ is the normal vector to $S_{ij}$ oriented from $T_i$ to $T_j$. As a consequence,
$$\Svar{Du_k}(S_{ij})= \int_{S_{ij}} \Svar{(u_k^+-u_k^-)\otimes \nu}\, d \HH^{n-1} = \int_{S_{ij}} |(u_k^+-u_k^-)\otimes \nu| \, d\HH^{n-1} \leq  \frac{2C}{k}\HH^{n-1}(S_{ij}) \to 0$$
as $k \to \infty$. Using again \eqref{eq:diese}, we get that
$$\lim_{k\to\infty}\Svar{Du_k}(\R^n) =\lim_{k\to\infty}\sum_{i\in \N} \Svar{Du_k}(T_i) = \sum_{i\in \N} \Svar{Du}(T_i)=\Svar{Du}(\R^n).$$
Finally, since $\Svar{Du}(\partial\O)= |Du|(\partial\O) =0$, we deduce that $\Svar{Du_k}(\O) \to \Svar{Du}(\O)$.
\end{proof}

\section{Concluding remarks and open problems}

It is to be expected that some density result with singular objects holds for measures satisfying a general linear PDE constraint as in \cite{DPR} (the so-called $\mathcal A$-free measures). The results presented in this work correspond to the particular cases $\mathcal A={\rm curl}$ (in the vectorial $BV$ case) and $\mathcal A={\rm curl\, curl}$ (in the $BD$ case). 

\medskip

Another relevant example, e.g. in materials science, is the divergence constraint (see \cite{BIR2}). Given a bounded open set, we define the space
$$DM(\O)=\{ \sigma \in \M(\O;\Ms) : \ {\rm div}\sigma=0 \text{ in }\mathcal D'(\O;\R^n)\}.$$
The wave cone associated to this differential constraint (see e.g. \cite[Section 2.2]{BIR2}) is given by
$$\Lambda_{\rm div}=\{ A \in \Ms : \; {\rm det}(A)=0\}.$$
In dimension $n=2$, it is known that (provided $\O$ is smooth enough and simply connected), for all $\sigma \in DM(\O)$, there exists a function $u \in BH(\O)$ (which means that $u \in W^{1,1}(\O)$ and $D^2 u \in \M(\O; \Ms)$) such that $\sigma={\rm cof}(D^2 u)$. In mechanical language, the function $u$ is sometimes referred to as the Airy function. Applying \cite[Theorem 21]{AABU} (see also \cite[Theorem 2.2]{ABC}), there exists a sequence $\{u_k\}_{k \in \N}$ of continuous and piecewise affine functions such that $u_k \to u$ in $L^\infty(\O)$ and $\Svar{D^2 u_k}(\O) \to \Svar{D^2 u}(\O)$. Since $u_k$ is continuous and piecewise affine, it follows that $\nabla u_k \in SBV(\O;\R^2)$ and
$$D^2 u_k= a_k \nu_{J_{\nabla u_k}}\otimes\nu_{J_{\nabla u_k}} \HH^1 \res J_{\nabla u_k},$$
where $a_k: J_{\nabla u_k} \to \R$ is a Borel function and $\nu_{J_{\nabla u_k}}$ is the approximated normal to the jump $J_{\nabla u_k}$ of $\nabla u_k$. Defining $\sigma_k={\rm cof}(D^2 u_k)$, we get that $\sigma_k \in DM(\O)$,
$$\sigma_k = a_k  \tau_{J_{\nabla u_k}}\otimes\tau_{J_{\nabla u_k}} \HH^1 \res J_{\nabla u_k},$$
where $\tau_{J_{\nabla u_k}}=R \nu_{J_{\nabla u_k}}$ is an approximate tangent vector to $J_{\nabla u_k}$, and $R$ is the rotation matrix
$$R=
\begin{pmatrix}
0 & -1\\
1 & 0
\end{pmatrix}.
$$
Since a matrix $A \in \mathbb M^{2 \times 2}_{\rm sym}$ share the same eigenvalues with ${\rm cof}(A)$, we get that $\Svar{\sigma_k}(\O) \to \Svar{\sigma}(\O)$ and $\frac{d\sigma_k}{d|\sigma_k|} =\pm \tau_{J_{\nabla u_k}}\otimes\tau_{J_{\nabla u_k}} \in \Lambda_{\rm div}$ $|\sigma_k|$-a.e. in $\O$. 

Let us denote by $|\cdot|_{\dive}:=(|\cdot|+I_{\Lambda_{\dive}})^{**}$. As already evidenced in \cite{Bouchitte} and \cite[Formula (1.10)]{BIR2}, one has
$$|\cdot|_{\dive}=|\cdot|_S \quad \text{ in }\mathbb{M}^{2\times 2}_{\rm sym}.$$
Following again \cite{BIR2,Bouchitte}, we expect a similar density result to hold in dimension $n=3$ with for all $A \in \mathbb{M}^{3\times 3}_{\rm sym}$,
$$|A|_{\dive}=
\begin{cases}
\sqrt{(|\lambda_1|+|\lambda_2|)^2+\lambda_3^2} & \text{ if }|\lambda_1|+|\lambda_2|\leq |\lambda_3|,\\
\frac{1}{\sqrt 2}(|\lambda_1|+|\lambda_2|+|\lambda_3|) & \text{ if }|\lambda_1|+|\lambda_2|> |\lambda_3|,
\end{cases}$$
where $\lambda_1$, $\lambda_2$ and $\lambda_3$ are the eigenvalues of $A$ ordered as singular values $|\lambda_1|\leq |\lambda_2|\leq |\lambda_3|$.

\bigskip

In view of the structure of the singular part of $\mathcal{A}$-free Radon measures \cite{Alberti,DPR} and of our results Theorems \ref{thm:BD} and \ref{thm:BV}, one may expect the following general statement to be true:

\medskip

{\it Let $\mathcal A: \mathcal D'(\R^n;\R^m) \to \mathcal D'(\R^n;\R^d)$ be a linear differential operator and $\Lambda_{\mathcal A}$ be its associated wave cone. For every $z \in \R^m$, we define
$$|z|_{\mathcal A}:=(| \cdot | + I_{\Lambda_{\mathcal A}})^{**}(z),$$
where $|\cdot |$ is the Euclidean norm over $\R^m$ and $ I_{\Lambda_{\mathcal A}}$ is the indicator function of the set $\Lambda_{\mathcal A}$. Then, $|\cdot|_{\mathcal A}$ is a norm over $\R^m$. Moreover if $\O \subset \R^n$ is a bounded open set with Lipschitz boundary, for every $\mu \in \M(\O;\R^m)$ satisfying $\mathcal A \mu=0$ in $\mathcal D'(\O;\R^d)$, there exists a sequence $\{\mu_k\}_{k \in \N}$ in $\M(\O;\R^m)$ such that 
$$\begin{cases}
\mathcal A \mu_k=0& \text{in }\mathcal D'(\O;\R^d),\\
|\mu_k|_{\mathcal A}(\O) \to |\mu|_{\mathcal A}(\O),\\
\mu_k \perp \LL^n,&\\
\frac{d\mu_k}{d|\mu_k|} \in \Lambda_{\mathcal A} & |\mu_k|\text{-a.e. in }\O.
\end{cases}$$
The construction in this case would be of course more delicate since, depending on the order and on the form of the differential operator, compatibility constraints on the support of the $\mu_k$'s may be required. This goes beyond the scopes of the present paper and will be left to future investigations.}

\section*{Acknowledgements}

This work was supported by a public grant from the Fondation Math\'ematique Jacques Hadamard. FI acknowledges support of a International Emerging Actions project of the CNRS.

\bibliographystyle{siam}
\bibliography{biblio}

\end{document}